\documentclass[12pt,reqno]{amsart}
\usepackage{amssymb,amsmath,enumerate,extarrows,mathrsfs}
\usepackage{fullpage}
\usepackage[colorlinks=true,linkcolor=black,citecolor=black]{hyperref}
\usepackage[all]{xy}

\newcommand{\mm}{\mathfrak m}

\newcommand{\Fc}{\mathcal{F}}
\newcommand{\Lscr}{\mathscr{L}}

%

\DeclareMathOperator{\Coker}{Coker}
\DeclareMathOperator{\creg}{creg}

\DeclareMathOperator{\Img}{Im}

\DeclareMathOperator{\Ker}{Ker}
\DeclareMathOperator{\pdim}{pd}
\DeclareMathOperator{\reg}{reg}
\DeclareMathOperator{\Sym}{Sym}
\DeclareMathOperator{\Tor}{Tor}
\DeclareMathOperator{\Ann}{Ann}


\newtheorem{thm}{\bf Theorem}[section]
\newtheorem{lem}[thm]{\bf Lemma}
\newtheorem{cor}[thm]{\bf Corollary}
\newtheorem{prop}[thm]{\bf Proposition}

\theoremstyle{definition}
\newtheorem{quest}[thm]{Question}
\newtheorem{defn}[thm]{\bf Definition}

\theoremstyle{remark}
\newtheorem{rem}[thm]{Remark}
\newtheorem{ex}[thm]{Example}

\theoremstyle{remark}
\newtheorem*{rem*}{Remark}

\numberwithin{equation}{section}
%
%


\title[Products of ideals]{Products of ideals of linear forms in  quadric hypersurfaces }
\author[A. Conca]{Aldo Conca}
\address{Dipartimento di Matematica, Universit\`a Degli Studi di Genova, Via Dodecaneso 35, 16146 Genova, Italy}
\email{conca@dima.unige.it}
\author[H.D. Nguyen]{Hop D. Nguyen}
\address{Dipartimento di Matematica, Universit\`a Degli Studi di Genova, Via Dodecaneso 35, 16146 Genova, Italy}
\email{ngdhop@gmail.com}
\author[T. Vu]{Thanh Vu}
\address{Hanoi University of Science and Technology, 1 Dai Co Viet, Hai Ba Trung, Hanoi, Vietnam}
\email{vuqthanh@gmail.com}

\subjclass[2010]{13D02, 13D05}
\keywords{Linear resolution; regularity; Koszul algebra; universally Koszul algebra.}

\begin{document}

\begin{abstract}
Conca and Herzog proved in \cite{CH} that any product of ideals of linear forms in a polynomial ring has a linear resolution. The goal of this paper is to establish the same result for any quadric hypersurface. The main tool we develop and use is a flexible version of Derksen and Sidman's approximation systems  \cite{DS2}.  \end{abstract}

\maketitle

\section{Introduction}
Let $R$ be a standard graded algebra over a field $k$, i.e., $R=S/J$ where $S=k[x_1,\dots,x_n]$ is a graded polynomial ring with variables in degree 1 and $J$ a homogeneous ideal. For a finitely generated graded $R$-module $M$, denote by $\reg_R M$ the Castelnuovo-Mumford regularity of $M$ over $R$. The Castelnuovo--Mumford regularity computed over $S$ is denoted by $\reg M$ instead of $\reg_S M$. By definition $R$ is {\it Koszul} if $\reg_R k=0$. Koszul algebras are known to share with polynomial rings several  important features. For example, a beautiful and elegant result of Avramov, Eisenbud and Peeva \cite{AE,AP} asserts that Koszul algebras are exactly the algebras over which every finitely generated graded module has a finite Castelnuovo-Mumford regularity.  Another very special feature of Koszul algebras is that they tend to have  syzygies of relatively low degrees \cite{ACI}. But not all Koszul algebras are born equal and some of them are better than others. For example, Roos says in \cite{R} that a Koszul algebra R is `good' if all the Poincar\'e series of finitely generated graded $R$-modules are rational functions sharing a common denominator. He provides in the same paper examples of good and bad Koszul algebras. 

Other important notions are  that of absolutely Koszul algebras, see \cite{CINR, HI,IR}, and of  universally Koszul algebras, see \cite{Con1,Con2}. The latter play an important role in our discussion so let us recall the definition. Let $\Lscr(R)$ be  the collection of ideals generated by  linear forms of $R$,  and  $\Lscr^\infty(R)$ be the collection of products of ideals generated by  linear forms of $R$. 
The algebra $R$ is said to be {\it universally Koszul} if every ideal of $\Lscr(R)$ has a linear resolution over $R$. We refer the reader to \cite{Con1, Con2} for basic facts about universally Koszul algebras and for the characterization of universally Koszul Cohen-Macaulay domains and universally Koszul algebras defined by monomials. 
In \cite{CH} the following result was proved.  
\begin{thm}[Conca--Herzog, {\cite[Theorem 3.1]{CH}}]
\label{thm_ConcaHerzog}
Let $S$ be a polynomial ring over $k$. Then any   ideal $I$ of $\Lscr^\infty(S)$ has a linear resolution. 
\end{thm}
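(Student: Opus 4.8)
The plan is to reformulate the statement as a regularity bound and then prove it by a double induction. Write $I=L_1\cdots L_d$ with each $L_i$ generated by linear forms; since $I$ is generated in degree $d$, it has a linear resolution if and only if $\reg I=d$, and as $\reg I\ge d$ holds automatically it suffices to prove $\reg I\le d$. I would induct on the number of factors $d$, and for fixed $d$ on the number $m$ of linear generators of the last factor $\mathfrak a:=L_d=(\ell_1,\dots,\ell_m)$. The base cases are easy: if $d=1$ then $I=L_1$ is, after a linear change of coordinates, generated by a subset of the variables and so has a Koszul (hence linear) resolution; and if $m=1$ then $I=\ell_1 J\cong J(-1)$ with $J=L_1\cdots L_{d-1}$, so that $\reg I=\reg J+1=d$ by the inductive hypothesis on $d$, using that $\ell_1$ is a nonzerodivisor.

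For the inductive step I would peel off one generator of the last factor. Set $\mathfrak a'=(\ell_1,\dots,\ell_{m-1})$, $\ell=\ell_m$ and $J=L_1\cdots L_{d-1}$, so that $I=J\mathfrak a=J\mathfrak a'+\ell J$. The sum--intersection short exact sequence
\[
0\longrightarrow J\mathfrak a'\cap \ell J\longrightarrow J\mathfrak a'\oplus \ell J\longrightarrow J\mathfrak a\longrightarrow 0
\]
then relates $I$ to simpler objects. Here $\ell J\cong J(-1)$ has regularity $d$, while $J\mathfrak a'=L_1\cdots L_{d-1}\mathfrak a'$ is a product of $d$ ideals of linear forms whose last factor has only $m-1$ generators, so by the inner inductive hypothesis $\reg(J\mathfrak a')=d$. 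Hence the middle term has regularity $d$, and from the standard inequality $\reg(J\mathfrak a)\le\max\{\reg(J\mathfrak a'\oplus \ell J),\ \reg(J\mathfrak a'\cap \ell J)-1\}$ the problem reduces to the single estimate $\reg(J\mathfrak a'\cap \ell J)\le d+1$.

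To attack the intersection I would rewrite it as a colon: since $\ell$ is a nonzerodivisor, $J\mathfrak a'\cap \ell J=\ell\bigl(J\cap(J\mathfrak a':\ell)\bigr)$, which is isomorphic to $\bigl(J\cap(J\mathfrak a':\ell)\bigr)(-1)$, so the target becomes $\reg\bigl(J\cap(J\mathfrak a':\ell)\bigr)\le d$. The mechanism that should make the colon tractable is that an ideal generated by linear forms is prime: for any linear form $\ell$ one has $(L_i:\ell)=S$ if $\ell\in L_i$ and $(L_i:\ell)=L_i$ otherwise. This ought to yield a clean description of $(J\mathfrak a':\ell)$ as a \emph{sum} of products of $d-1$ ideals of linear forms, the summands being obtained by deleting each factor that contains $\ell$, and each such summand has a linear resolution by the outer inductive hypothesis.

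I expect the main obstacle to be precisely this last point: the class $\Lscr^\infty(S)$ of products of ideals of linear forms is not closed under sums, so the colon term that governs the induction lives in a strictly larger class, and one cannot simply invoke the inductive hypothesis on it. Overcoming this requires a regularity bound for such sums of products --- controlling $\reg$ of an expression $\sum_i\prod_{j\ne i}L_j$ in terms of the regularities of the summands and of their pairwise intersections --- which in turn forces one to iterate the same sum--intersection bookkeeping on these auxiliary ideals. Making this iteration terminate with the sharp bound $d$ is the crux of the argument, and it is exactly the combinatorial control that a well-chosen (possibly generic) linear form $\ell$ together with a carefully strengthened inductive statement are designed to provide.
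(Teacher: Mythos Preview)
This theorem is not proved in the paper: it is quoted from \cite{CH} and used as an ingredient elsewhere (for instance in the proofs of Proposition~\ref{prop_sufficient_condition} and Theorem~\ref{thm_reg_quotient}). There is therefore no argument here to compare your sketch against.

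On the sketch itself: your reductions are correct up to the point you flag, and you have located the genuine obstruction. After factoring out $\ell$, the ideal $J\cap(J\mathfrak a':\ell)$ is in general neither a product of ideals of linear forms nor an object to which either of your two inductive hypotheses applies; you propose that a ``carefully strengthened inductive statement'' would handle it, but you do not provide one, so the argument as written does not close. The proof in \cite{CH} circumvents this difficulty by a different induction: one passes to the subring $k[V]$ with $V=\sum_i (L_i)_1$, so that $\sum_i L_i$ is its graded maximal ideal, and inducts on $\dim k[V]$ using a filter\nobreakdash-regular linear form $x$ together with the decomposition recorded here as Lemma~\ref{lem_primarydecomp}. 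Modding out by $x$ keeps $I$ a product of $d$ ideals of linear forms in a smaller polynomial ring, while Lemma~\ref{lem_primarydecomp} controls $a_0(k[V]/I)$ directly; no colon of the problematic shape $J\mathfrak a':\ell$ ever enters the argument.
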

This result suggests the following definition.  

\begin{defn} 
We say that $R$ is {\it universally$^*$ Koszul} if every ideal in $\Lscr^\infty(R)$ has a linear resolution over $R$.
\end{defn} 

In  \cite[Section 5, Question 9]{CDR} the authors ask:   
\begin{quest}
\label{quest_product} 
Are universally Koszul algebras necessarily universally$^*$  Koszul? 
\end{quest}

A positive answer is given in  \cite[Section 5, Theorem 11]{CDR} for algebras of small dimension. However, Question \ref{quest_product} remains open in general. The main result of this paper answers in the positive Question \ref{quest_product} for  quadric hypersurfaces (which are known to be  universally Koszul). 

\begin{thm}[Theorem \ref{thm_quadric}]
\label{thm_main_intro}
Any quadric hypersurface is universally$^*$  Koszul.    
\end{thm}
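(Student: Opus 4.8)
The plan is to prove that $\reg_R I = d$ for every ideal $I = L_1\cdots L_d$ in $\Lscr^\infty(R)$, where $R = S/(q)$ and $q$ is a quadric (the case $q=0$ being Theorem \ref{thm_ConcaHerzog}). Since quadric hypersurfaces are Koszul, every finitely generated graded $R$-module has finite Castelnuovo--Mumford regularity, so the statement makes sense; because $I$ is minimally generated in degree $d$ we have $\reg_R I \geq d$ automatically, and it suffices to establish the upper bound $\reg_R I \leq d$. I would argue by a double induction: an outer induction on the number $d$ of factors, and, for fixed $d$, an inner induction on the number $c$ of minimal generators of the last factor $L_d = (\ell_1,\dots,\ell_c)$. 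The outer base case $d=1$ is precisely the assertion that $R$ is universally Koszul, which holds for every quadric hypersurface.

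For the inductive step set $J = L_1\cdots L_{d-1}$, so that $\reg_R J = d-1$ by the outer hypothesis, and write $I = (\ell_1,\dots,\ell_c)J = A + N$ with $A = (\ell_1,\dots,\ell_{c-1})J$ and $N = \ell_c J$. The module $A$ is a product of $d$ ideals of linear forms whose last factor has fewer generators, so $\reg_R A \leq d$ by the inner hypothesis. The Mayer--Vietoris sequence
\[
0 \longrightarrow A \cap N \longrightarrow A \oplus N \longrightarrow I \longrightarrow 0
\]
yields $\reg_R I \leq \max\{\reg_R A,\ \reg_R N,\ \reg_R(A\cap N)-1\}$, so everything reduces to the two bounds $\reg_R N \leq d$ (this is exactly the inner base case $c=1$) and $\reg_R(A\cap N) \leq d+1$. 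Using $A\cap N = \ell_c\,(A :_R \ell_c)$ together with the annihilator sequence $0 \to (0 :_J \ell_c)(-1) \to J(-1) \xrightarrow{\ \ell_c\ } N \to 0$, both bounds are governed by the regularity of colon modules of the shape $A :_R \ell_c$ and $0 :_J \ell_c$.

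The heart of the matter is that over $S$ such colons of products of ideals of linear forms remain within the class $\Lscr^\infty$, which is what drives the Conca--Herzog argument, whereas over $R$ the relation $q = 0$ breaks this closure: an element $b$ with $\ell_c b \in A$ in $R$ only satisfies $\ell_c b \in \widetilde A + (q)$ upstairs, so the colon acquires new generators built from cofactors of $q$, and the annihilator $0:_J\ell_c$ is nonzero exactly when $\ell_c$ divides $q$. This is where I would invoke the flexible version of Derksen and Sidman's approximation systems. Rather than computing the colons exactly, I would assemble an approximation system whose members are the partial sums $(\ell_1,\dots,\ell_j)J$ and auxiliary modules that approximate the $q$-perturbed colons from above, linked by the short exact sequences produced by the Mayer--Vietoris and annihilator steps. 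The flexibility is what lets one replace an intractable colon by a slightly larger module assembled from products of ideals of linear forms and the single controlled contribution of $q$, so that the inductive regularity bounds propagate through the system provided the connecting maps are exact in the relevant range.

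The main obstacle is precisely this intersection/colon step, and the genuinely new difficulty relative to the polynomial ring case is the interference of the quadric: colons of elements of $\Lscr^\infty(R)$ need no longer lie in $\Lscr^\infty(R)$, and the extra generators coming from $q$ must be shown not to push the regularity past $d+1$. Designing the approximation system so that it quarantines exactly this $q$-contribution --- identifying the right approximating modules attached to a single quadric and verifying the exactness and regularity hypotheses of the system --- is where the real work lies. Once such a system is in place, the bound $\reg_R I \leq d$ follows formally by propagating the estimates through the two nested inductions.
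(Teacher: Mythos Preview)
Your outline correctly locates the difficulty but stops exactly where the argument begins, and the route you sketch is not the one the paper takes. You propose to work directly over $R=S/(q)$, induct on $d$ and on the number of generators of the last factor, and then ``quarantine'' the $q$-contribution to the colons by some yet-to-be-designed approximation system. The paragraph that follows your Mayer--Vietoris reduction is not a proof but a description of what a proof would have to accomplish; you never specify the approximating modules, the ideals annihilating kernels and cokernels, or why their sum is $\mm$-primary. Over $R$ there is no obvious candidate: the colons $A:_R\ell_c$ genuinely leave $\Lscr^\infty(R)$, and nothing in your sketch explains how a generalized approximation system with degree-$1$ control can be built from them.

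The paper avoids this obstruction by never computing a resolution over $R$ at all. The key maneuver is Proposition~\ref{prop_sufficient_condition}: it suffices to prove that $R$ is $\Tor$-linear with respect to $\Lscr^\infty(S)$, i.e.\ that $\creg_S(S/I,S/(f))\le \reg(S/I)+1$ for $I\in\Lscr^\infty(S)$. Since $\pdim_S S/(f)=1$, this reduces to the single bound $\reg\bigl((I:f)/I\bigr)\le d-1$ over the polynomial ring $S$. Now the colon $I:f$ is taken in $S$, and Lemma~\ref{lem_colon_product} (via the primary decomposition of Lemma~\ref{lem_primarydecomp}) shows that it is generated inside the subring $k[V]$ where $V=\sum V_i$. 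This is the crucial structural fact you are missing: in $k[V]$ the ideal $I_1+\cdots+I_d$ \emph{is} the maximal ideal, so a generalized approximation system of degree~$1$ is available. The paper then constructs an explicit filtration $M_d\subseteq\cdots\subseteq M_0=(I:f)/I$ with quotients $N_i$, and for each auxiliary module $P_1$ arising in the induction exhibits concrete $I_j$-approximations $P_1\to P_{1,j}$ whose targets are instances of the same invariants for fewer factors. Theorem~\ref{thm_DS3.5} then closes the induction.

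In short, the missing idea is to transfer the problem to $S$ via $\Tor$-linearity so that the colon is $(I:f)$ in a polynomial ring; once there, Lemma~\ref{lem_colon_product} furnishes the $\mm$-primary condition that makes the generalized approximation machinery run. Your direct-over-$R$ strategy does not supply any replacement for this step.
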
 

To prove the theorem we  introduce an invariant,  the {\it Chardin mixed regularity} $\creg_S(M,N)$ of a pair of graded $S$-modules $M$ and $N$, as follows:
$$\creg_S(M,N)=\sup  \{ \reg \Tor_i^S(M,N)-i  : i\geq 0\}.$$ 
We say that $R$  is {\it $\Tor$-linear} with respect to a collection $\Fc$ of ideals of $S$ if 
$$
\creg_S(R,S/I)\leq \reg(S/I)+1 \quad \text{for every $I\in \Fc$}.
$$
Then we observe in Proposition \ref{prop_sufficient_condition} that: 
 \begin{enumerate}
\item  If $R$ is   $\Tor$-linear with respect to $\Lscr(S)$ then $R$ is universally Koszul, 
\item  If $R$ is   $\Tor$-linear with respect to $\Lscr^\infty(S)$ then $R$ is universally$^*$ Koszul. 
 \end{enumerate}
Finally we prove the following:
    
\begin{thm}
\label{thm_main_Torlin}
If $R=S/(f)$ is a quadric hypersurface \textup{(}namely $\deg f =2$\textup{)}, then $R$ is $\Tor$-linear with respect to  $\Lscr^\infty(S)$.
\end{thm}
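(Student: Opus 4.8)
The plan is to exploit the fact that the quadric hypersurface $R=S/(f)$ has the two-term minimal free resolution $0\to S(-2)\xrightarrow{f}S\to R\to 0$ over $S$. Tensoring it with $S/I$ gives the complex $0\to (S/I)(-2)\xrightarrow{f}S/I\to 0$, so I can read off the Tor modules directly:
$$\Tor_0^S(R,S/I)=S/(I+(f)),\qquad \Tor_1^S(R,S/I)=\bigl((I:f)/I\bigr)(-2),$$
and $\Tor_i^S(R,S/I)=0$ for $i\ge 2$. Hence
$$\creg_S(R,S/I)=\max\bigl\{\reg\bigl(S/(I+(f))\bigr),\ \reg\bigl((I:f)/I\bigr)+1\bigr\}.$$
By Theorem \ref{thm_ConcaHerzog} the ideal $I$ has a linear resolution; writing $d$ for its generating degree, $\reg(S/I)=d-1$, and the theorem becomes the pair of inequalities $\reg(S/(I+(f)))\le d$ and $\reg((I:f)/I)\le d-1$.

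Next I would route both inequalities through the colon ideal $I:f$ using
$$0\to (I:f)/I\to S/I\to S/(I:f)\to 0 \quad\text{and}\quad 0\to (S/(I:f))(-2)\xrightarrow{f}S/I\to S/(I+(f))\to 0.$$
The second sequence gives $\reg(S/(I+(f)))\le\max\{\reg(S/I),\reg(S/(I:f))+1\}$, so $\reg(S/(I+(f)))\le d$ follows as soon as $\reg(S/(I:f))\le d-1$. The estimate $\reg((I:f)/I)\le d-1$ is more delicate: it cannot be extracted from these sequences alone, since when $f$ is a nonzerodivisor modulo $I$ the module $(I:f)/I$ vanishes whereas $\reg(S/(I:f))$ may equal $d-1$, and the exact-sequence bounds are then off by one. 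What is really needed is a structural hold on the module $(I:f)/I=\Ann_{S/I}(\bar f)$ itself.

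The structural input I would establish is that taking colons by a quadric keeps us inside the family: $I:f\in\Lscr^\infty(S)$, with $\reg(S/(I:f))\le d-1$ and $\reg((I:f)/I)\le d-1$. Granting the linear resolutions of $I$ and $I:f$ from Theorem \ref{thm_ConcaHerzog}, the first exact sequence above then delivers the remaining inequality. I would prove this structural claim by induction on the number $d$ of linear factors: writing $I=JL$ with $J=L_1\cdots L_{d-1}$ and $L=L_d$, I would analyze $JL:f$ and $JL+(f)$ by exact sequences tying $S/JL$ to $S/J$ and to modules manufactured from the linear generators of $L$. Marshalling these many exact sequences uniformly across the whole family $\Lscr^\infty(S)$ is precisely the task for which the flexible version of Derksen and Sidman's approximation systems is built, and it is there that the induction is organized and the regularity kept in the required range.

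The step I expect to be the main obstacle is exactly this uniform control of $(I:f)/I$ when $f$ does not split as a product of two linear forms, so that the colon cannot be computed by iterating the comparatively easy case of a colon by a single linear form. Reducing the quadric colon to the combinatorics of which linear primes $L_j$ contain $f$, and then propagating that information through the approximation machinery without losing a degree, is the crux on which the whole argument rests.
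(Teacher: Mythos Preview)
Your reduction is correct and matches the paper: since $R$ has the two-term resolution $0\to S(-2)\to S\to R\to 0$, only $\Tor_0$ and $\Tor_1$ matter, and everything comes down to $\reg\bigl((I:f)/I\bigr)\le d-1$. The paper reaches the same conclusion (and handles the bound on $S/(I+(f))$ via Proposition~\ref{prop_compare_reg} rather than the second exact sequence, but that is cosmetic).

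The genuine gap is your structural claim that $I:f\in\Lscr^\infty(S)$. This is false. Take $S=k[x,y,z]$, $I=(x,y)(x,z)=(x^2,xy,xz,yz)$ and $f=(y+z)^2$. Using the primary decomposition $I=(x,y)\cap(x,z)\cap(x,y,z)^2$ one finds $(x,y):f=(x,y)$, $(x,z):f=(x,z)$ and $(x,y,z)^2:f=S$, so $I:f=(x,y)\cap(x,z)=(x,yz)$. This ideal has minimal generators in degrees $1$ and $2$, hence cannot be a product of linear ideals. (The same phenomenon already occurs for a linear colon: $I:(y+z)=(x,yz)$ as well.) So the induction you sketch, which at each step invokes Theorem~\ref{thm_ConcaHerzog} for $I:f$, cannot start. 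Note also that even if $I:f$ did lie in $\Lscr^\infty(S)$ with $\reg S/(I:f)\le d-1$, the exact sequence $0\to(I:f)/I\to S/I\to S/(I:f)\to 0$ only yields $\reg\bigl((I:f)/I\bigr)\le\max\{d-1,(d-1)+1\}=d$, one more than you need; your ``first exact sequence then delivers the remaining inequality'' step loses a degree.

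What the paper does instead is to bound $\reg\bigl((I:f)/I\bigr)$ directly, without ever asserting that $I:f$ stays in $\Lscr^\infty(S)$. It filters $(I:f)/I=M_0\supseteq M_1\supseteq\cdots\supseteq M_d$ by $M_i=\bigl((I:f)\cap J_1\cap\cdots\cap J_i\bigr)/I$, where $J_j=\prod_{u\ne j}I_u$, and controls each graded piece $N_i=M_i/M_{i+1}$. The bottom $M_d$ is handled by Lemma~\ref{lem_primarydecomp} and finite length considerations; each $N_i$ is compared, via short exact sequences, to modules of the same type built from the shorter family $\{I_u:u\ne j\}$. The crucial point is that the comparison maps are not surjections but only $I_j$-approximations in the sense of Section~\ref{sect_generalizedapprox}, and that by Lemma~\ref{lem_colon_product} all modules involved live over the subring $k[V]$ whose maximal ideal is $\sum_j I_j$. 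This is exactly the situation in which Theorem~\ref{thm_DS3.5} applies with $t=1$ and closes the induction without the off-by-one loss. The approximation machinery is thus used not to organize a peel-one-factor induction on $I=JL$, but to compare the auxiliary modules $P_1$ to their analogues for $d-2$ factors simultaneously across all indices $j$.
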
 

We deduce Theorem \ref{thm_main_Torlin} from Theorem \ref{thm_reg_quotient}, which asserts that 
\[
\creg_S(S/I, S/(f))=\reg S/I+\reg S/(f) 
\]
where $I\in \Lscr^\infty(S)$ and $f$ is a homogeneous element of $S$. Our proof of this equality uses a generalization of an inductive method due to Derksen and Sidman \cite{DS2}. Roughly speaking, Derksen and Sidman's idea is to keep the regularity of a module $N$ under control by means of an approximation system, that is a family of surjective maps $\phi_i:N\to N_i$ such that $\sum_i \Ann (\Ker \phi_i)$ is (primary to) the graded maximal ideal of $S$.  For the applications we have in mind, we need a more flexible mechanism where the maps $\phi_i$ need not be  surjective.  This gives rise to the notion of generalized approximation system  that we introduce and discuss in Section \ref{sect_generalizedapprox}. 

This paper is the continuation of an effort started by the last two authors in \cite{NgV2} with the goal of  understanding resolutions of products of ideals generated by linear forms in quotient rings.

\section{Regularities}
\label{sect_pre}
 
Let $k$ be a field and let $R$ be a standard graded $k$-algebra. In other words, $R$ is a commutative, $\mathbb N$-graded algebra with $R_0=k$ and $R$ is generated over $k$ by finitely many elements of degree $1$. As such, $R$ can be represented as $R=S/J$ where $S$ is a polynomial ring (i.e.~the symmetric algebra of $R_1$) and $J$ a homogeneous ideal of $S$.  We also denote by $k$ the $R$-module $R/\oplus_{i>0}R_i$. Let $M$ be a finitely generated graded $R$-module. The {\it Castelnuovo-Mumford regularity} of $M$ over $R$ is
\[
\reg_R M=\sup\{j-i: \Tor^R_i(k,M)_j\neq 0\},
\]
if $M\neq 0$ and, by convention, $\reg_R 0=-\infty$. We say that $M$ has a {\it linear resolution} over $R$ if for some integer $d$, the equality $\Tor^R_i(k,M)_j=0$ holds for all $j\neq i+d$. In that case, we also say that $M$ has a {\it $d$-linear resolution} over $R$.

Let $\mm$ be the graded maximal ideal of $S$. The Castelnuovo-Mumford regularity  of $M$ computed over the polynomial ring $S$ has also a well-known  cohomological  interpretation:
\[
\reg_S M=\sup\{i+j: H^i_{\mm}(M)_j\neq 0\}.
\]
Here $H^i_{\mm}(M)$ denotes the $i$-th local cohomology of $M$ with support in $\mm$. Because of this, the regularity computed over $S$ is sometimes called the absolute  Castelnuovo-Mumford regularity and it is  denoted  simply by $\reg M$   while that over $R$ is called the relative Castelnuovo-Mumford regularity.  Throughout the paper we will use without reference the fact that (absolute/relative) Castelnuovo-Mumford regularity behaves well with respect to short exact sequences (see \cite[Section 2, Lemma 9]{CDR} for details). 
We will also use the following well-known fact (see \cite[Lemma 1.1]{CH}). 

\begin{lem}
\label{lem_finitelength}
Let $M$ be a graded $R$-module of finite length.  Then $\reg M=\sup\{i: M_i\neq 0\}.$
If furthermore $0\to M\to P\to N \to 0$ is  an exact sequence where $P, N$ are finitely generated graded $R$-modules, then $\reg P=\max\{\reg M, \reg N\}.$
\end{lem}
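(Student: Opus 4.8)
The plan is to exploit the local-cohomology description of absolute regularity recorded above, namely $\reg M=\sup\{i+j: H^i_{\mm}(M)_j\neq 0\}$, together with the behaviour of $H^0_{\mm}$ along a short exact sequence. For the first assertion, I would observe that a finite length graded module $M$ coincides with its own $\mm$-torsion, so $H^0_{\mm}(M)=M$, while $H^i_{\mm}(M)=0$ for all $i\geq 1$ (being Artinian and already supported at $\mm$, it has no higher local cohomology). Feeding this into the formula collapses the supremum to the $i=0$ term, giving $\reg M=\sup\{j: H^0_{\mm}(M)_j\neq 0\}=\sup\{j: M_j\neq 0\}$.

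For the second assertion the inequality $\reg P\leq\max\{\reg M,\reg N\}$ is the standard behaviour of regularity along a short exact sequence, so the content lies in the reverse inequality. Here I would pass to the long exact sequence in local cohomology attached to $0\to M\to P\to N\to 0$. Since $M$ has finite length, its higher local cohomology vanishes, and the sequence breaks into isomorphisms $H^i_{\mm}(P)\cong H^i_{\mm}(N)$ for every $i\geq 1$ together with a short exact sequence of finite length modules
\[
0\to M\to H^0_{\mm}(P)\to H^0_{\mm}(N)\to 0.
\]

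For the three finite length modules in this last sequence, the first part of the lemma identifies their regularities with top nonzero degrees, and exactness in each graded component shows that the top degree of $H^0_{\mm}(P)$ is the maximum of those of $M$ and $H^0_{\mm}(N)$. Splitting each of $\reg P$ and $\reg N$ into its $i=0$ contribution and its $i\geq 1$ contribution --- the latter two being equal by the isomorphisms above --- and combining with this top-degree identity yields $\reg P=\max\{\reg M,\reg N\}$.

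I expect the only real subtlety to be the boundary case $\reg M=\reg N+1$: the purely numerical regularity inequalities for a short exact sequence deliver the upper bound effortlessly but fail to force equality precisely in this case (as the sequence $0\to S(-1)\xrightarrow{x}S\to k\to 0$ over $k[x]$ shows, where the asserted equality is false once $M$ is not of finite length, since there $\reg P=0$ while $\max\{\reg M,\reg N\}=1$). It is exactly here that the finite length hypothesis on $M$, through the vanishing of its higher local cohomology, becomes indispensable, and routing the argument through local cohomology rather than the numerical inequalities is what makes this transparent.
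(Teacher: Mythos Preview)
The paper does not actually supply a proof of this lemma; it merely cites it as a ``well-known fact'' from \cite[Lemma 1.1]{CH}. Your local-cohomology argument is correct and is the standard route to this result: the first part follows immediately from $H^0_{\mm}(M)=M$ and Grothendieck vanishing, and for the second part the key observation---that the vanishing of $H^i_{\mm}(M)$ for $i\ge 1$ yields both the isomorphisms $H^i_{\mm}(P)\cong H^i_{\mm}(N)$ for $i\ge 1$ and the short exact sequence $0\to M\to H^0_{\mm}(P)\to H^0_{\mm}(N)\to 0$---is exactly what makes the equality go through. Your closing remark isolating the boundary case $\reg M=\reg N+1$ and the counterexample $0\to S(-1)\to S\to k\to 0$ are apt and make clear why the finite-length hypothesis cannot be dropped.
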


The following result  is important in our  discussion of $\Tor$-linear algebras. 

\begin{prop}
\label{prop_compare_reg}
Let $\varphi:Q\to R$ be a  homomorphism of standard graded Koszul $k$-algebras such that via scalar restriction, $R$ is a finitely generated $Q$-module \textup{(}e.g. $\varphi$ is surjective\textup{)}. Let $M$ and $N$ be finitely generated graded modules over $Q$ and $R$, respectively. Then 
\[
\reg_R (M\otimes_Q N) \le \max \left\{ \reg_Q M+\reg_R N,~ \sup_{i\ge 1}\left\{\reg_Q \Tor^Q_i(M,N)-(i+1)\right\}\right\}.
\]
\end{prop}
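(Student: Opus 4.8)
The plan is to compute $\reg_R(M\otimes_Q N)$ from a minimal graded free resolution $F_\bullet \to M$ over $Q$, tensored with $N$. Since $Q$ is Koszul, $\reg_Q M$ is finite and each $F_i$ is a direct sum of twists $Q(-a_{ij})$ with $a_{ij} \le i + \reg_Q M$. Applying $-\otimes_Q N$ produces a complex $G_\bullet$ of finitely generated graded $R$-modules (the differentials are $R$-linear because $N$ is an $R$-module), with $G_i = \bigoplus_j N(-a_{ij})$ and $H_i(G_\bullet) = \Tor_i^Q(M,N)$; in particular $H_0(G_\bullet) = M\otimes_Q N$ and $\reg_R G_i \le i + \reg_Q M + \reg_R N$. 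Thus everything reduces to bounding the regularity of the zeroth homology of $G_\bullet$ in terms of the regularities of its terms and of its higher homologies.

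First I would form the double complex $P_\bullet \otimes_R G_\bullet$, where $P_\bullet \to k$ is the minimal free resolution of $k$ over $R$; since $R$ is Koszul this resolution is linear, so $P_a$ is generated in degree $a$. Both spectral sequences of this double complex converge to the hyper-Tor $\mathbb T_n = \Tor^R_n(k, G_\bullet)$. Taking $P$-homology first yields a spectral sequence whose terms are subquotients of $\Tor_a^R(k, G_b)$, which (by linearity of $P_\bullet$ and the definition of $\reg_R N$) vanishes in degrees above $a + \reg_R G_b \le a + b + \reg_Q M + \reg_R N$. Reading off the total degree $a+b = n$, this shows that $\mathbb T_n$ is concentrated in degrees $\le n + \reg_Q M + \reg_R N$.

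Next I would exploit the other spectral sequence, with $E^2_{p,q} = \Tor^R_p(k, \Tor_q^Q(M,N))$, also converging to $\mathbb T_{p+q}$. Because it is first-quadrant and no differential enters the row $q=0$, the module $\Tor^R_p(k, M\otimes_Q N) = E^2_{p,0}$ carries a finite decreasing filtration whose bottom piece $E^\infty_{p,0}$ is a subquotient of $\mathbb T_p$, and whose successive quotients inject, via the differentials $d_r$, into subquotients of $\Tor_{p-r}^R(k, H_{r-1}(G_\bullet))$ for $r \ge 2$. The bottom piece lives in degrees $\le p + \reg_Q M + \reg_R N$ by the previous step, while the $r$-th quotient lives in degrees $\le (p-r) + \reg_R H_{r-1} = p + \bigl(\reg_R \Tor_{r-1}^Q(M,N) - (r-1) - 1\bigr)$. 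Assembling these estimates shows that $\Tor^R_p(k, M\otimes_Q N)$ vanishes in degrees exceeding $p + \max\{\reg_Q M + \reg_R N,\ \sup_{i\ge1}(\reg_R \Tor_i^Q(M,N) - i - 1)\}$, which is exactly the claimed bound once $\reg_R$ is replaced by $\reg_Q$ on the $\Tor$ modules.

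Finally I would pass from $\reg_R \Tor_i^Q(M,N)$ to $\reg_Q \Tor_i^Q(M,N)$ using the comparison inequality $\reg_R L \le \reg_Q L$, valid for finitely generated graded $R$-modules $L$ under a finite homomorphism of Koszul algebras. The main obstacle is the spectral-sequence bookkeeping that produces the sharp shift $-(i+1)$ in the correction term: one must track precisely which $E^r_{p-r,r-1}$ receives each differential leaving the bottom row. I would emphasize that the more naive approach, iterating the short exact sequences $0\to Z_i\to G_i\to B_{i-1}\to 0$ and $0\to B_i\to Z_i\to H_i\to 0$, does not terminate, since $F_\bullet$ is an infinite resolution and the boundary modules $B_i$ feed back on themselves; the virtue of the double complex is that first-quadrant convergence disposes of this infinite tail automatically, while the uniform bound $\reg_R G_i - i \le \reg_Q M + \reg_R N$ is what keeps the hyper-Tor $\mathbb T_n$ under control.
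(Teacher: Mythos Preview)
Your argument is correct and is essentially the same as the paper's: both tensor the minimal $Q$-free resolution of $M$ with $N$, bound $\reg_R$ of the zeroth homology of the resulting complex by the regularities of its terms and of its higher homologies, and then invoke $\reg_R\le\reg_Q$ for finitely generated $R$-modules. The only difference is packaging: the paper quotes \cite[Section~2, Lemma~9(2)]{CDR} for the inequality $\reg_R H_0(C)\le\max\{\sup_i\{\reg_R C_i-i\},\ \sup_{j\ge1}\{\reg_R H_j(C)-(j+1)\}\}$, whereas you reprove exactly this bound by hand via the hyper-Tor double complex $P_\bullet\otimes_R G_\bullet$ and the analysis of the bottom row of the second spectral sequence.
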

\begin{proof}
Let $F$ be the minimal graded free resolution of $M$ over $Q$. Denote $C=F\otimes_Q N$. Then we have
$H_i(C)=\Tor^Q_i(M,N)$ for all $i\ge 0$. Observe that $\reg_R C_i=\reg_Q F_i+\reg_R N$ for all $i$. Now using  \cite[Section 2, Lemma 9(2)]{CDR}, we obtain
\begin{align*}
\reg_R (M\otimes_Q N) &=\reg H_0(C)  \le \max\left\{\sup_{i\ge 0}\{\reg_R C_i-i\},~ \sup_{j\ge 1}\{\reg_R H_j(C)-(j+1)\} \right\} \\
 & = \max\left\{\sup_{i\ge 0}\{\reg_Q F_i-i\}+\reg_R N,~  \sup_{j\ge 1}\{\reg_R H_j(C)-(j+1)\} \right\} \\
 & = \max \left \{\reg_Q M+\reg_R N,~ \sup_{j \ge 1}\{\reg_R \Tor^Q_j(M,N)-(j+1)\}  \right\} \\
 &  \le \max \left \{\reg_Q M+\reg_R N,~ \sup_{j\ge 1}\{\reg_Q \Tor^Q_j(M,N)-(j+1)\}  \right\}.
\end{align*}
The last inequality follows from \cite[Corollary 6.3]{NgV}. 
\end{proof}


\section{Strong versions of Koszulness} 
We keep the following notation for this section: $R$ is a standard graded $k$-algebra with the presentation $R=S/J$ where $S=\Sym(R_1)$ is the symmetric algebra on the $k$-vector space $R_1$. 

By definition,  $R$ is Koszul if $\reg_R k=0$.  For generalities about Koszul algebras and related properties we refer the reader to \cite{CDR}. Here we simply recall the notion of universally Koszul algebras and introduce universally$^*$ Koszul and $\Tor$-linear algebras. 

Let $\Lscr(R)$ be the collection of ideals generated by  linear forms of $R$,  and  $\Lscr^\infty(R)$ be the collection of the products of ideals generated by  linear forms of $R$.  

\begin{defn}   
We say that $R$ is {\it universally Koszul} if every ideal in $\Lscr(R)$ has a linear resolution over $R$. We say  $R$ is {\it universally$^*$  Koszul} if every   ideal in $\Lscr^\infty(R)$  has a linear resolution over $R$. 
\end{defn} 
  
See \cite{Con1,Con2} for generalities on universally Koszul rings. Conca and Herzog's Theorem \ref{thm_ConcaHerzog} asserts that  polynomial rings are universally$^*$ Koszul.
 
We say that $R$ is a {\it quadric hypersurface} if its defining ideal is generated by a quadratic form. By \cite[Proposition 3.1]{Con1}, any quadric hypersurface is universally Koszul.  In \cite[Section 5, Question 9]{CDR} it is asked whether every universally Koszul algebra is indeed universally$^*$  Koszul. To study this question, we now introduce $\Tor$-linear algebras.  For finitely generated graded $R$-modules $M$ and $N$ define the Chardin mixed regularity $\creg_R(M,N)$ of $M$ and $N$  as:   
\[
\creg_R(M,N)=\sup_{i\ge 0}\{\reg_R \Tor^R_i(M,N)-i\}.
\]
Chardin \cite{Ch1} proved  the following
\begin{thm}
\label{thm_Chardin}
Let $M$ and $N$ be finitely generated graded $S$-modules. Then  
\[
\creg_S(M,N) \ge \reg M+\reg N,
\]
and equality holds  if $\dim \Tor^S_1(M,N)\le 1$.
\end{thm}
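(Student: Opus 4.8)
The plan is to prove the two assertions separately: the inequality $\creg_S(M,N)\ge \reg M+\reg N$ will hold unconditionally, while the reverse inequality yielding equality will exploit the hypothesis on $\dim\Tor^S_1(M,N)$. For the lower bound I would compare two ways of computing the triple hyper-Tor $\Tor^S_\bullet(k,M,N)$ of $k$, $M$ and $N$. Taking minimal graded free resolutions $F_\bullet\to M$ and $G_\bullet\to N$, the complex $k\otimes_S(F_\bullet\otimes_S G_\bullet)\cong(k\otimes_S F_\bullet)\otimes_k(k\otimes_S G_\bullet)$ has zero differential by minimality, so its homology in homological degree $n$ is $\bigoplus_{i+j=n}\Tor^S_i(k,M)\otimes_k\Tor^S_j(k,N)$. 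On the other hand the hyper-Tor spectral sequence gives
\[
E^2_{p,q}=\Tor^S_p\bigl(k,\Tor^S_q(M,N)\bigr)\Longrightarrow \bigoplus_{i+j=p+q}\Tor^S_i(k,M)\otimes_k\Tor^S_j(k,N).
\]
Writing $a=\reg M$, $b=\reg N$, I would choose the \emph{largest} homological degrees $i_0,j_0$ with $\Tor^S_{i_0}(k,M)_{a+i_0}\ne 0$ and $\Tor^S_{j_0}(k,N)_{b+j_0}\ne 0$. A degree count shows that in homological degree $i_0+j_0$ and internal degree $a+b+i_0+j_0$ the abutment reduces to the single tensor factor indexed by $(i_0,j_0)$ and is therefore nonzero; hence some $E^\infty_{p,q}$, and a fortiori $E^2_{p,q}=\Tor^S_p(k,\Tor^S_q(M,N))$, is nonzero in that internal degree. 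This forces $\reg\Tor^S_q(M,N)-q\ge a+b$ and gives the inequality.

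For the equality the first step is to control the dimensions of the higher Tor modules. By Auslander's rigidity of Tor over the regular local rings $S_\mathfrak p$, the vanishing of $\Tor^S_1(M,N)_\mathfrak p$ forces $\Tor^S_i(M,N)_\mathfrak p=0$ for all $i\ge 1$; consequently $\operatorname{Supp}\Tor^S_i(M,N)\subseteq\operatorname{Supp}\Tor^S_1(M,N)$, so $\dim\Tor^S_i(M,N)\le 1$ for every $i\ge 1$. In particular $H^p_{\mm}\bigl(\Tor^S_i(M,N)\bigr)=0$ for all $p\ge 2$ and $i\ge 1$.

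It then remains to show $\reg\Tor^S_i(M,N)-i\le a+b$ for every $i$. I would work with the complex $C=M\otimes_S G_\bullet$, whose homology is $\Tor^S_\bullet(M,N)$ and whose terms satisfy $\reg C_q=\reg M+t_q(N)\le a+b+q$, where $t_q(N)=\max\{j:\Tor^S_q(k,N)_j\ne 0\}$. The spectral sequence $E_1^{p,q}=H^p_{\mm}(C_q)$ then transfers this term-wise bound to the hypercohomology $\mathbb H^\bullet_{\mm}(C)$, which thus satisfies the estimate ``internal degree minus cohomological degree $\le a+b$''. Comparing with the second spectral sequence $E_2^{p,q}=H^p_{\mm}\bigl(\Tor^S_q(M,N)\bigr)$ abutting to the same hypercohomology, the dimension estimate confines the rows $q\ge 1$ to the two columns $p\in\{0,1\}$, so that running the differentials from the top nonzero homological degree downward should show that each $H^p_{\mm}\bigl(\Tor^S_q(M,N)\bigr)$ survives into the abutment and hence inherits the regularity bound.

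The main obstacle is exactly this last spectral-sequence bookkeeping. A priori the differentials could either annihilate a high internal-degree class of $E_2^{p,q}$ (so that the abutment estimate says nothing about it) or inject such a class into the row $q=0$, namely into $H^p_{\mm}(M\otimes_S N)$, which is the one row not constrained by the dimension hypothesis. Pinning down these differentials is where $\dim\Tor^S_1(M,N)\le 1$ is indispensable: it collapses every row $q\ge 1$ onto the columns $p=0,1$, rigidifying the possible maps enough to carry the regularity bound from $\mathbb H^\bullet_{\mm}(C)$ back to each individual Tor module, and I expect the careful inductive analysis of the $q=0$ row to be the most delicate point of the argument.
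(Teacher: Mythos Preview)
The paper does not give a self-contained proof of this theorem; it simply records that both assertions are special cases of Chardin's results in \cite{Ch1}, citing his Lemma~5.1 for the inequality and his Corollary~5.8 for the equality under the dimension hypothesis. Your proposal is therefore a reconstruction rather than a comparison, and your two-spectral-sequence strategy is precisely the method underlying Chardin's proofs.

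Your argument for the lower bound is correct: the hyper-Tor spectral sequence together with the extremal choice of $(i_0,j_0)$ forces $\Tor^S_p\bigl(k,\Tor^S_q(M,N)\bigr)\neq 0$ in internal degree $a+b+p+q$ for some $(p,q)$, whence $\reg\Tor^S_q(M,N)-q\ge a+b$. This is essentially Chardin's Lemma~5.1 specialised to the minimal free resolution of $N$.

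For the reverse inequality your outline is also correct, but the obstacle you anticipate does not actually arise. After the rigidity step you have $\dim\Tor^S_i(M,N)\le 1$ for every $i\ge 1$, so on the page
\[
E_2^{p,q}=H^p_{\mm}\bigl(\Tor^S_{-q}(M,N)\bigr)\qquad(q\le 0)
\]
every row $q\le -1$ is supported only in the columns $p\in\{0,1\}$, while the row $q=0$ may occupy all columns. The differential $d_r$ on this page has bidegree $(r,\,1-r)$, so for $r\ge 2$ its target always lies in a column $p+r\ge 2$ and a row $q-r+1\le -1$; by the support constraint that target is zero regardless of the source. Hence \emph{every} $d_r$ with $r\ge 2$ vanishes and the spectral sequence degenerates at $E_2$. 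Each $H^p_{\mm}(\Tor^S_i(M,N))$ is then a subquotient of $\mathbb{H}^{\,p-i}_{\mm}(C)$, and the bound on the hypercohomology coming from the first spectral sequence (via $\reg C_i\le a+b+i$) yields $\reg\Tor^S_i(M,N)\le a+b+i$ for all $i$ simultaneously, including $i=0$. No inductive bookkeeping on the $q=0$ row is needed, and the scenario you worry about---a high-degree class being killed by or mapping into that row---cannot occur, since there are simply no nonzero higher differentials at all.
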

 
Indeed,  this is a special case of  much more general results in {\it ibid.}. The first assertion follows from  \cite[Lemma 5.1(i)--(ii)]{Ch1} by setting  $S=R$, $F=$ the minimal graded free resolution of $N$ and observing  that condition (B) in {\it loc.~cit.} is trivially satisfied since $F$ is minimal. The second assertion follows from \cite[Corollary 5.8]{Ch1}. Theorem \ref{thm_Chardin} was inspired by previous results on the regularity of  Tor and product of ideals by Sidman \cite{S}, Conca and Herzog \cite{CH}, Caviglia \cite{Ca} and Eisenbud, Huneke and Ulrich \cite{EHU}. 

\begin{defn}
Let $\Fc$ be a collection of ideals of $S$. We say that $R$ is {\it $\Tor$-linear}  with respect to $\Fc$ if 
$\creg_S(S/I,R)\leq \reg(S/I)+1$ for every  $I\in \Fc$. 
\end{defn}
\begin{rem}
Our choice of terminology is justified as follows. Assume that $R$ is $\Tor$-linear with respect to $\Fc$ and $\Fc$ consists of ideals with linear resolutions (which is the case if $\Fc=$ $\Lscr(S)$ or $\Lscr^\infty(S)$). Then for all $I\in \Fc$ and $i\ge 2$, the $S$-module $\Tor^S_i(R,S/I)$ has a linear resolution. Indeed, let $I\in \Fc$ be an ideal with a $d$-linear resolution. By assumption
\[
\reg \Tor^S_i(R,S/I) \le i+\reg(S/I)+1=i+d.
\]
On the other hand, let $F$ be the minimal graded free resolution of $R$ over $S$, then for $i\ge 2$, $F_{i-1}$ is generated in degree at least $i$. Thus $\Tor^S_i(R,S/I)=\Tor^S_{i-1}(R,I)$, being a subquotient of $F_{i-1}\otimes_S I$, is generated in degree at least $i+d$. In particular, it has an $(i+d)$-linear resolution.
\end{rem}

The importance of Tor-linearity is that this property translates questions about resolutions over singular Koszul rings, usually infinite, to questions about the resolutions of Tor over polynomial rings, necessarily finite.
\begin{prop} 
\label{prop_sufficient_condition}
One has:
\begin{enumerate}[\quad \rm (i)]
\item  If $R$ is   $\Tor$-linear with respect to $\Lscr(S)$ then $R$ is universally Koszul and $\reg R\le 1$. 
\item  If moreover $R$ is   $\Tor$-linear with respect to $\Lscr^\infty(S)$ then $R$ is universally$^*$ Koszul. 
 \end{enumerate}
 \end{prop}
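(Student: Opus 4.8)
The plan is to deduce both statements from the change-of-rings estimate in Proposition \ref{prop_compare_reg}, applied to the surjection $\varphi\colon S\to R$. First I set up the comparison between ideals downstairs and upstairs. If $I\in\Lscr^\infty(R)$, write $I=I_1\cdots I_s$ with each $I_j\in\Lscr(R)$ (the case $I\in\Lscr(R)$ being $s=1$); since $R_1=S_1$, I may lift the linear generators of each $I_j$ to $S$ and obtain $\tilde I_j\in\Lscr(S)$ with $\tilde I_j R=I_j$. Setting $\tilde I=\tilde I_1\cdots\tilde I_s$ gives $\tilde I R=I$ and hence $R/I=R\otimes_S S/\tilde I$, with $\tilde I\in\Lscr(S)$ when $s=1$ and $\tilde I\in\Lscr^\infty(S)$ in general. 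Because $I\subseteq\mm^s$, the minimal generators of $I$ lie in degrees $\ge s$; thus it will suffice to prove $\reg_R(R/I)\le s-1$, for then these generators lie in degree exactly $s$ and $\reg_R I=\reg_R(R/I)+1\le s$ yields a linear resolution of $I$ over $R$.

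Before I can invoke Proposition \ref{prop_compare_reg} I must know that $R$ is Koszul, and this is the one step that is not purely formal. Applying the hypothesis of $\Tor$-linearity to the maximal ideal $\mm\in\Lscr(S)$, for which $S/\mm=k$ and $\reg(S/\mm)=0$, I get $\creg_S(k,R)\le 1$. As each $\Tor^S_i(k,R)$ has finite length, Lemma \ref{lem_finitelength} identifies its regularity with its top nonzero degree, so $\creg_S(k,R)=\reg_S R$; hence $\reg R\le 1$, which is the numerical assertion of (i). Now a standard graded algebra with $\reg_S R\le 1$ is Koszul (see \cite{CDR}), so $R$ is Koszul. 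I regard this passage from the regularity bound to the Koszul property as the main obstacle, since it alone relies on external input rather than the bookkeeping of the rest.

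With $R$ (and $S$) Koszul, I apply Proposition \ref{prop_compare_reg} with $Q=S$, $M=S/\tilde I$ and $N=R$, using $\reg_R R=0$. The $\Tor$-linearity hypothesis gives $\reg_S\Tor^S_i(S/\tilde I,R)\le i+\reg_S(S/\tilde I)+1$ for every $i\ge 1$, so each term $\reg_S\Tor^S_i(S/\tilde I,R)-(i+1)$ is at most $\reg_S(S/\tilde I)$; the remaining term in the estimate is $\reg_S(S/\tilde I)+\reg_R R=\reg_S(S/\tilde I)$. Therefore $\reg_R(R/I)\le\reg_S(S/\tilde I)$.

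It remains to compute $\reg_S(S/\tilde I)$. For (i), $\tilde I\in\Lscr(S)$ is generated by linear forms, so $S/\tilde I$ is a polynomial ring and $\reg_S(S/\tilde I)=0$; hence $\reg_R(R/I)\le 0=s-1$ and $R$ is universally Koszul. For (ii), note first that $\Lscr(S)\subseteq\Lscr^\infty(S)$, so the hypothesis of (ii) contains that of (i) and the Koszulness of $R$ is already in hand. By the Conca--Herzog Theorem \ref{thm_ConcaHerzog} the ideal $\tilde I\in\Lscr^\infty(S)$ has a linear resolution over $S$; being generated in degree $s$ it satisfies $\reg_S(S/\tilde I)=s-1$, whence $\reg_R(R/I)\le s-1$ and, by the reduction of the first paragraph, $I$ has a linear resolution over $R$. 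Thus $R$ is universally$^*$ Koszul.
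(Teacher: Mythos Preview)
Your proof is correct and follows the same approach as the paper: apply Proposition~\ref{prop_compare_reg} to $S\to R$ with $M=S/\tilde I$ and $N=R$, use the $\Tor$-linearity hypothesis to bound the $\Tor$ terms, and invoke Theorem~\ref{thm_ConcaHerzog} for part~(ii). The only differences are cosmetic---the paper derives $\reg R\le 1$ from Chardin's inequality (Theorem~\ref{thm_Chardin}) rather than via the identity $\creg_S(k,R)=\reg_S R$, and it leaves implicit the verification that $R$ is Koszul (needed to apply Proposition~\ref{prop_compare_reg}), which you spell out.
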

 \begin{proof} 
First we prove (ii). Let $I\in \Lscr^\infty(S)$. Using Proposition \ref{prop_compare_reg} for the map $S\to R$, $M=S/I$ and $N=R$, we get
\[
\reg_R (R/IR) \le \max \left\{\reg (S/I),\sup_{i\ge 1} \left\{ \reg \Tor^S_i(S/I,R)-(i+1)\right\}\right\}= \reg S/I.
\]
Hence by Theorem \ref{thm_ConcaHerzog},  $IR$ has a linear resolution over $R$. In other words, $R$ is universally$^*$ Koszul. The proof of the first assertion of (i) is similar. The second assertion of (i) follows from the inequality of Theorem \ref{thm_Chardin} and the hypothesis.
 \end{proof} 
\begin{figure}[ht]
\[
\xymatrix{
\textnormal{Polynomial ring}\ar@{=>}[r]  &\textnormal{$\Tor$-linear w.r.t.~$\Lscr^\infty(S)$} \ar@{=>}[d]^{(1)}  \ar@{=>}[r]^{(2)} &\textnormal{Universally$^*$ Koszul}\ar@{=>}[d]^{(3)} \\
  & \textnormal{$\Tor$-linear w.r.t.~$\Lscr(S)$} \ar@{=>}[dr]  \ar@{=>}[r] &  \textnormal{Universally Koszul} \ar@{=>}[r]                      &   \textnormal{Koszul}  \\ 
& & \reg R\le 1 \ar@{=>}[ru]  &
}
\]
\caption{Properties of a standard graded $k$-algebra $R$} \label{fig_Koszul}
\end{figure}
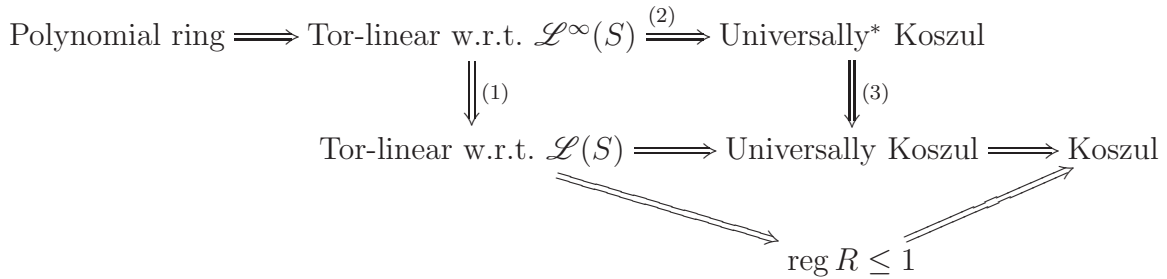
Figure \ref{fig_Koszul} summarizes what we known about the relationships between the properties  we have discussed.  Examples are known showing that all the implications, with the exception of (1), (2), (3), are not reversible.  Note that (3) is related to the open Question \ref{quest_product}.  If the latter turns out to have  a positive answer then (3) is reversible and, in view of Example \ref{ex_nonTor-lin},  $(2)$ is not reversible. We do not know  whether (1) can be reversed.  
\begin{ex}
\label{ex_nonTor-lin} 
Let $A=k[x_1,x_2,x_3,y_1,y_2,y_3]$ and  $R=S/J$ where $J$ is the ideal of $2$-minors of the matrix  
 \[
\left(\begin{matrix}
x_1 & x_2 & y_1 & y_2\\
x_2 & x_3  & y_2 & y_3 
\end{matrix}
\right).
\]
Then $R$ is universally Koszul and $\reg R=1$ by \cite[Theorem 3.3]{Con1}. Nevertheless  $R$ is not $\Tor$-linear with respect to $\Lscr(S)$. Indeed, for $I=(x_1,x_3)\in  \Lscr(S)$ one has 
$$\reg \Tor^S_1(S/(x_1,x_3),R)=3 >2$$
as one can check with  CoCoA \cite{CoCoA},  Macaulay2 \cite{GS}, or by direct computation. 
\end{ex}


\section{Generalized approximation}
\label{sect_generalizedapprox} 
Inspired by \cite{CH} and \cite{DS1}, Derksen and Sidman introduce in \cite{DS2} the approximation systems, a powerful tool to bound inductively the regularity. In the current section, we propose a refinement of Derksen and Sidman's construction, namely generalized approximation. 

In the sequel, $R$ will be a ring and $I\subset R$ an ideal.  We start with an observation concerning the transfer of kernel-cokernel annihilation in tensor products. 

\begin{rem} 
\label{notzero}
Let $\phi:N\to N'$ be an $R$-linear map and $I$ an ideal such that $I\Ker \phi=0$ and $I\Coker \phi=0$. Let $B$ be an $R$-module and $\phi \otimes_R B: N\otimes_R B \to N'\otimes_R B$ the induced map. Then simple homological considerations imply that $I^2\Ker (\phi \otimes_R B)=0$ and $I\Coker (\phi \otimes_R B)=0$ but 
$I\Ker (\phi \otimes_R B)\neq 0$ in general, see \ref{ex_notannihilated}. So one might ask  under which additional assumptions on $\phi$ does it follow that $I\Ker (\phi\otimes_R B)$ vanishes for all $B$. We will see in \ref{kerann} that if $\phi$ is an $I$-approximation (to be defined below) then one can actually deduce that $I\Ker (\phi\otimes_R B)= 0$ for all $B$. 
 \end{rem} 
 
 \begin{ex} 
 \label{ex_notannihilated} 
 Let $R=k[a,b,c]/(a^2,ab)$, $I=(\overline{a})$, $\phi:R^2=Re_1 \oplus Re_2\to R$ the linear map defined by $\phi(e_1)=\overline{a}$ and $\phi(e_2)=\overline{b}$. Then $\Ker \phi=\langle (\overline{a}, 0), (\overline{b}, 0), (0, \overline{a})\rangle$ so that $I\Ker \phi=0$ and 
$I \Coker \phi=0$. Now for $B=R/(ac+bc)$  one has $(\overline{c},\overline{c})\in \Ker (\phi\otimes_R B)$ and $I  (\overline{c},\overline{c}) \neq 0$ in $B^2$.  
 \end{ex} 
 
Next we define the main ingredient to construct generalized approximations.

\begin{defn}
\label{defn_approx}
Let $\phi: N\to N'$ be a homomorphism of $R$-modules. We say that $\phi$ is an {\it $I$-approximation},  if there exist an $R$-module $Z$ and a commutative diagram
\[
\xymatrix{
&   & Z \ar@{->>}[rd]^{\beta} & \\
&N~ \ar@{^{(}->}[ru]^{\alpha} \ar@{-->}[rr]^{\phi} &  &N' 
}
\]
in which $\alpha, \beta$ are $R$-module homomorphisms  such that:
\begin{enumerate}[\quad \rm(i)]
\item $\alpha$ is injective and $\beta$ is surjective,
\item $I\Coker \alpha=0$ and $I\Ker \beta=0$.
\end{enumerate}
If $R$ is a graded ring,  $I$ is a homogeneous ideal and  $\phi$ is a degree $0$  homomorphism of graded $R$-modules, then we require additionally that $Z$ is a graded $R$-module, and  $\alpha$ and $\beta$ are graded of degree $0$.  
\end{defn}

In the proof of our main result in Section \ref{sect_quadric}, we will use the following equivalent formulation of the notion of $I$-approximations.
\begin{prop} 
\label{prop_characterizeI-approx}
Let $\phi: N\to N'$ be a homomorphism of $R$-modules. Then $\phi$ is an $I$-approximation if and only if there exist an $R$-module $Q$ and submodules $M,P,M',P'$ of $Q$ such that the following conditions hold:
\begin{enumerate}[\quad \rm (i)]
\item $M\subseteq P$  and $M'\subseteq P'$,
\item $IM'\subseteq M \subseteq M'$ and  $IP'\subseteq P \subseteq P'$,
\item There exist isomorphisms $N \cong P/M, P'/M'\cong N'$ such that we have a commutative diagram
\[
\xymatrix{
P/M \ar[r]^{\iota}             & P'/M' \ar[d]^{\cong} \\
N \ar[u]^{\cong} \ar@{-->}[r]^{\phi} & N'
}
\]
in which $\iota$ is induced by the inclusions $P\subseteq P'$ and $M\subseteq M'$.
\end{enumerate}
If $R$ is a graded ring, $I$ is a homogeneous ideal, $\phi$ is a homomorphism of graded $R$-modules, then we require additionally that $Q$ is a graded $R$-module, $M,P,M',P'$ are graded submodules of $Q$, and the isomorphisms $N\cong P/M$ and $P'/M' \cong N'$ respect the gradings.
\end{prop}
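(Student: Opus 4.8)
The plan is to prove both implications by exhibiting explicit modules realizing the two descriptions, the key observation being that the annihilator conditions of Definition \ref{defn_approx} translate \emph{exactly} into the chains of inclusions in (ii), via the dictionary $\Img\alpha \leftrightarrow P/M$, $\Ker\beta \leftrightarrow M'/M$, $\Coker\alpha \leftrightarrow P'/P$.

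For the direction from an $I$-approximation to the submodule picture, I would start from maps $\alpha\colon N\hookrightarrow Z$ and $\beta\colon Z\twoheadrightarrow N'$ with $\phi=\beta\alpha$ as in Definition \ref{defn_approx}, and simply set $Q=Z$ together with
\[
M=0,\quad P=\Img\alpha,\quad M'=\Ker\beta,\quad P'=Z.
\]
Then (i) and the outer inclusions $M\subseteq M'$, $P\subseteq P'$ are immediate, and the two remaining inclusions of (ii) unwind precisely to the annihilator hypotheses: $IP'\subseteq P$ reads $IZ\subseteq\Img\alpha$, which is $I\Coker\alpha=0$, while $IM'\subseteq M$ reads $I\Ker\beta=0$. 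The isomorphisms $N\cong P/M=\Img\alpha$ (via $\alpha$) and $P'/M'=Z/\Ker\beta\cong N'$ (via $\beta$) are the ones forced by injectivity of $\alpha$ and surjectivity of $\beta$, and under them the connecting map $\iota$ sends $\alpha(n)+M\mapsto\alpha(n)+\Ker\beta\mapsto\beta(\alpha(n))=\phi(n)$, giving the commutativity required in (iii). In the graded case nothing extra is needed, since $Z$ is graded and $\alpha,\beta$ have degree $0$, so all four submodules are graded.

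For the converse I would reverse this construction. Given $Q$ and the submodules with the inclusions of (ii), note first that $M\subseteq M'\subseteq P'$ makes $P'/M$ well defined, and set $Z=P'/M$. I would take $\alpha\colon P/M\hookrightarrow P'/M$ the natural injection (injective because $M\subseteq P\subseteq P'$) and $\beta\colon P'/M\twoheadrightarrow P'/M'$ the natural surjection (defined because $M\subseteq M'$). Here $\Coker\alpha\cong P'/P$ and $\Ker\beta\cong M'/M$, so the inclusions $IP'\subseteq P$ and $IM'\subseteq M$ from (ii) become exactly $I\Coker\alpha=0$ and $I\Ker\beta=0$. Precomposing $\alpha$ and postcomposing $\beta$ with the isomorphisms $N\cong P/M$ and $P'/M'\cong N'$ of (iii) turns these into an injection $N\hookrightarrow Z$ and a surjection $Z\twoheadrightarrow N'$, and the commutative square of (iii) forces their composite to be $\phi$; gradedness is again automatic.

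The argument is therefore essentially bookkeeping, and I do not expect a genuine obstacle. The only point deserving attention is matching the data correctly: one must check that the cokernel condition on $\alpha$ corresponds to the \emph{outer} inclusion $IP'\subseteq P$ and the kernel condition on $\beta$ to the \emph{inner} inclusion $IM'\subseteq M$ (and not the reverse), and that the map $\iota$ induced by $P\subseteq P'$ and $M\subseteq M'$ really equals $\beta\alpha$ rather than some variant of it. Once the correspondence $Z\leftrightarrow P'/M$ is fixed and the two cokernel/kernel identifications are pinned down, both implications follow by inspection.
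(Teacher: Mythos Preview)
Your proposal is correct and follows essentially the same approach as the paper: in both directions you make the identical choices $Q=Z$, $M=0$, $P=\Img\alpha$, $M'=\Ker\beta$, $P'=Z$ for the forward implication, and $Z=P'/M$ with the natural inclusion and projection for the converse. Your explicit verification that the annihilator conditions correspond to the inclusions $IP'\subseteq P$ and $IM'\subseteq M$, and that $\iota$ agrees with $\beta\alpha$, is slightly more detailed than the paper's, but the argument is the same.
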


\begin{proof}
Assume that $\phi$ is an $I$-approximation, so that there exist a module $Z$ and a commutative diagram as in Definition \ref{defn_approx}. Denote $P=\Img \alpha$, $M=0$, $M'=\Ker \beta$, which are submodules of $P'=Z$. Set $Q=Z$. We have a commutative diagram
\[
\xymatrix{
P/0 \ar[r]^{\iota}             & Z/M' \ar[d]^{\overline{\beta}} \\
N \ar[u]^{\alpha} \ar@{-->}[r]^{\phi} & N'
}
\]
where $\overline{\beta}$ is induced by $\beta$. Both vertical maps are isomorphisms. 

Since $\Coker \alpha=Z/P$ and $\Ker \beta=M'$ are annihilated  by $I$, we have
\begin{align*}
IZ &\subseteq P \subseteq Z,\\
IM' &=0 \subseteq M'. 
\end{align*}
Therefore the modules $Q,M,P,M',P'$ satisfy the required conditions.

Conversely, assume that there exist modules $Q,M,P,M',P'$ as in the statement of Proposition \ref{prop_characterizeI-approx}. We can identify $N$ with $P/M$, $N'$ with $P'/M'$ and $\phi$ with the map $\iota$. Then let $Z=P'/M$, $\alpha$ and $\beta$ be the natural maps $P/M\to P'/M$ and $P'/M\to P'/M'$. Clearly $\alpha$ is injective, $\beta$ is surjective and $\phi=\beta \circ \alpha$. Moreover $\Coker \alpha=P'/P$ and $\Ker \beta=M'/M$ are both annihilated by $I$. Hence $\phi$ is an $I$-approximation. Straightforward variations of the arguments given  work in the graded setting as well. 
\end{proof}

Some elementary but useful properties of $I$-approximations  are the following: 

\begin{lem}
\label{lem_approx}
Let $\phi: N\to N'$ be an $I$-approximation. Then $I \Ker \phi=0$ and $I \Coker \phi=0$.  
\end{lem}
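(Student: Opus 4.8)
The plan is to verify both annihilation statements directly from the factorization $\phi = \beta \circ \alpha$ guaranteed by Definition \ref{defn_approx}, using the defining properties $I\Coker \alpha = 0$ and $I\Ker \beta = 0$. Since $\phi$ is an $I$-approximation, we fix the commutative diagram with the intermediate module $Z$, the injection $\alpha : N \hookrightarrow Z$, and the surjection $\beta : Z \twoheadrightarrow N'$.

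For the cokernel statement, I would argue as follows. Because $\beta$ is surjective, the map $\phi = \beta \circ \alpha$ and the map $\beta$ have the same image, so there is a natural surjection $\Coker \alpha = Z/\Img \alpha \twoheadrightarrow N'/\Img \phi = \Coker \phi$ induced by $\beta$. Since $I$ annihilates $\Coker \alpha$ and annihilation passes to quotients, it follows that $I\Coker \phi = 0$.

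For the kernel statement, I would use the injectivity of $\alpha$. If $n \in \Ker \phi$, then $\alpha(n) \in \Ker \beta$, hence $I\alpha(n) = 0$; because $\alpha$ is injective and $R$-linear, $\alpha(In) = I\alpha(n) = 0$ forces $In = 0$. Thus $I\Ker \phi = 0$. Concretely, $\alpha$ maps $\Ker \phi$ isomorphically onto a submodule of $\Ker \beta$, and an ideal annihilating $\Ker \beta$ annihilates every submodule of it.

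This statement is elementary and I do not anticipate any genuine obstacle; the only point requiring a little care is to make sure both arguments use the correct one of the two hypotheses (the cokernel bound exploits surjectivity of $\beta$ together with $I\Coker\alpha=0$, while the kernel bound exploits injectivity of $\alpha$ together with $I\Ker\beta=0$), rather than accidentally swapping them. The graded case needs no separate treatment since all maps involved are degree-zero homomorphisms of graded modules, so every submodule and quotient appearing above is automatically graded.
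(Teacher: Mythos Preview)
Your approach is exactly the one in the paper: exhibit an injection $\Ker\phi \hookrightarrow \Ker\beta$ and a surjection $\Coker\alpha \twoheadrightarrow \Coker\phi$, then conclude from the annihilation hypotheses on $\beta$ and $\alpha$ respectively. One sentence needs correction, though: it is not true that ``$\phi$ and $\beta$ have the same image'' (since $\beta$ is surjective, $\Img\beta = N'$, whereas $\Img\phi$ can be a proper submodule). What you actually need for the surjection $Z/\Img\alpha \twoheadrightarrow N'/\Img\phi$ is simply that $\beta$ is surjective and $\beta(\Img\alpha) = \Img(\beta\circ\alpha) = \Img\phi$; with that fix the argument is complete and coincides with the paper's proof.
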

\begin{proof}
Using the notation of Definition \ref{defn_approx}, there is an injective map $\Ker \phi\to \Ker \beta$ and a surjective map $\Coker \alpha \to \Coker \phi$.
\end{proof}

\begin{lem}
\label{lem_surjectivemap}
Let $\phi: N\to N'$ be a homomorphism of $R$-modules. 
\begin{enumerate}[\quad \rm(i)]
\item If $\phi$ is injective, then $\phi$ is an $I$-approximation if and only if $I\Coker \phi=0$.
\item If $\phi$ is surjective, then $\phi$ is an $I$-approximation if and only if $I\Ker \phi=0$.
\end{enumerate}
\end{lem}
The proof is immediate from Definition \ref{defn_approx} and Lemma \ref{lem_approx}.
 We introduce now some notation that is useful in dealing with  tensor products.

\begin{rem}
\label{rem_tensor_prod}
Given an ideal $J$ and an $R$-module $M$ then the product $JM$ can be seen as the image of the natural map  $J\otimes_R M\to R\otimes_R M \cong M$. Furthermore  $(R/J)\otimes_R M\cong M/JM$. 
Similarly if $G$ is a free $R$-module and $W\subseteq G$ a submodule we will denote by $WM$ the image of the map 
$$W\otimes_R M\to G\otimes_R M.$$
In particular $GM$ gets identified with $G\otimes_R M$ and 
$$
G/W\otimes_R M\cong GM/WM.
$$
In practice, let  $\{e_i: i \in \Lambda\}$ be a basis for $G$, where $\Lambda$ is an index set, then $GM\cong M^{\Lambda}$ and $WM$ gets identified with  the submodule of $M^{\Lambda}$ generated by elements of the form $(a_im: i\in \Lambda)\in M^\Lambda$ with $m\in M$ and $(a_i: i\in \Lambda)\in W$. Note that, given another free module $G_1$ with a submodule $W_1$ we can identify $W_1(WM)$ with $(W_1W)M$ and with $W(W_1M)$. 
\end{rem}

The following lemma will be employed in the proofs of Theorem  \ref{thm_DS3.5}.

\begin{lem}
\label{lem_Torapprox}
Let $\phi: N\to N'$ be an $I$-approximation. Let $B=G/W$ be an $R$-module where  $G$ is a free $R$-module and $W\subseteq G$ a submodule. 
\begin{enumerate}[\quad \rm(i)]
\item  With the notation of   \ref{rem_tensor_prod},  the natural map $WN \to WN'$ induced by $\phi$ is an $I$-approximation.
\item The map $N\otimes_R B \xrightarrow{\phi\otimes_R B}N'\otimes_R B$ is an $I$-approximation.
\item Assume further that $\phi$ is surjective. Then for all $i\ge 1$, the natural map 
$$
\Tor^R_i(N,B) \xrightarrow{\Tor^R_i(\phi,B)} \Tor^R_i(N',B)
$$ 
is an $I$-approximation.
\end{enumerate}
\end{lem}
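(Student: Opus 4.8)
The plan is to produce, in each of the three cases, the injective-then-surjective factorization required by Definition \ref{defn_approx}, using for the harder parts the submodule description of Proposition \ref{prop_characterizeI-approx}. Throughout I would fix a factorization $N\xrightarrow{\alpha}Z\xrightarrow{\beta}N'$ of $\phi$ with $\alpha$ injective, $\beta$ surjective, $IZ\subseteq\alpha(N)$ and $I\Ker\beta=0$, identify $GM$ with $M^{\Lambda}$ as in Remark \ref{rem_tensor_prod}, and recall that there $WM$ is generated by the tuples $(a_i m)_i$ with $(a_i)\in W$ and $m\in M$. For (i) I would simply factor the induced map $WN\to WN'$ through $WZ$: the maps $\alpha,\beta$ induce on $M^{\Lambda}$ an injection $\alpha^{\Lambda}$ and a surjection $\beta^{\Lambda}$, whose restrictions give an injection $WN\to WZ$ and a surjection $WZ\to WN'$ composing to the map induced by $\phi$. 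Only two annihilation checks remain: $I\cdot WZ\subseteq WN$, since a generator $\lambda(a_i z)_i=(a_i\lambda z)_i$ of $I\cdot WZ$ has $\lambda z\in IZ\subseteq\alpha(N)$; and $I\Ker(WZ\to WN')=0$, since that kernel is $WZ\cap(\Ker\beta)^{\Lambda}$ while $I(\Ker\beta)^{\Lambda}=(I\Ker\beta)^{\Lambda}=0$. This is exactly the data of an $I$-approximation with middle module $WZ$.

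The naive analogues of this for (ii) and (iii) fail, and this is the crux of the lemma: $\alpha\otimes_R B$ need not be injective, and composing the approximations arising from $0\to\Ker\phi\to N\to N'\to 0$ (a two-out-of-three argument) yields only $I^2\Ker=0$ rather than the sharp $I\Ker=0$ demanded of an $I$-approximation. I would circumvent this by passing to Proposition \ref{prop_characterizeI-approx}. Writing $N\cong P/M$ and $N'\cong P'/M'$ inside $Q$ with $IP'\subseteq P\subseteq P'$ and $IM'\subseteq M\subseteq M'$, a short computation with Remark \ref{rem_tensor_prod} gives $N\otimes_R B\cong GP/(WP+GM)$ and $N'\otimes_R B\cong GP'/(WP'+GM')$. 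Inside $GQ$ I would then take $P_B=GP$, $M_B=WP+GM$, $P'_B=GP'$, $M'_B=WP'+GM'$; every inclusion of Proposition \ref{prop_characterizeI-approx} is formal except $I\cdot WP'\subseteq WP$ (again $\lambda(a_i p')_i=(a_i\lambda p')_i$ with $\lambda p'\in IP'\subseteq P$) and $I\cdot GM'=(IM')^{\Lambda}\subseteq GM$, and $\phi\otimes_R B$ corresponds to the inclusion-induced map. This realizes $\phi\otimes_R B$ as an $I$-approximation, proving (ii).

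For (iii), with $\phi$ surjective put $K=\Ker\phi$, so $IK=0$ by Lemma \ref{lem_surjectivemap}(ii). Choosing a free resolution $F_\bullet$ of $B$ over $R$ and setting $C=N\otimes_R F_\bullet$, $C'=N'\otimes_R F_\bullet$, the chain map $p=\phi\otimes F_\bullet$ is surjective with kernel the complex $D=K\otimes_R F_\bullet$, and the decisive point is that $IK=0$ forces $ID_j=0$ for every $j$. Since $\Tor^R_i(\phi,B)=H_i(p)$, I would realize it via Proposition \ref{prop_characterizeI-approx} inside $Q=C_i$, writing $Z_i(C)$ and $B_i(C)$ for the $i$-cycles and $i$-boundaries of $C$ and taking $P=Z_i(C)$, $M=B_i(C)$ (so $P/M=H_i(C)=\Tor^R_i(N,B)$) and $P'=\{x\in C_i:\partial x\in D_{i-1}\}$, $M'=B_i(C)+D_i$ (so $P'/M'\cong H_i(C')=\Tor^R_i(N',B)$, the inclusion-induced map being $H_i(p)$). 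Here $IP'\subseteq P$ and $IM'\subseteq M$ are immediate from $ID_j=0$: for $x\in P'$ one has $\partial(\lambda x)=\lambda\partial x\in ID_{i-1}=0$, so $\lambda x\in Z_i(C)=P$, while $I(B_i(C)+D_i)\subseteq B_i(C)=M$. In every case the graded statement follows by carrying the gradings through Proposition \ref{prop_characterizeI-approx}, and the case $i=0$ of (iii) is already covered by (ii).
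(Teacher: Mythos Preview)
Your argument is correct. Parts (i) and (ii) are essentially the paper's proof: the paper factors $WN\to WN'$ through $WZ$ exactly as you do, and for (ii) it takes the middle module $GZ/WN$, which is precisely the module $P'_B/M_B$ your application of Proposition~\ref{prop_characterizeI-approx} produces when one starts from the specific data $M=0$, $P=N$, $P'=Z$, $M'=\Ker\beta$. So your phrasing via Proposition~\ref{prop_characterizeI-approx} is just the abstract form of what the paper writes out concretely.

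Part (iii) is where you diverge. The paper first reduces to $i=1$ by replacing $B$ with a syzygy, then exploits surjectivity of $\phi$ to choose a \emph{common} free presentation $N=F/U$, $N'=F/V$ with $U\subseteq V$ and $IV\subseteq U$; the explicit description $\Tor^R_1(-,B)=(G(-)\cap WF)/W(-)$ then feeds directly into Proposition~\ref{prop_characterizeI-approx}. You instead resolve $B$, set $D=K\otimes_R F_\bullet$, and realize $H_i(p)$ inside $C_i$ via $P=Z_i(C)$, $M=B_i(C)$, $P'=\partial^{-1}(D_{i-1})$, $M'=B_i(C)+D_i$, using only $ID_\bullet=0$. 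Both routes land in Proposition~\ref{prop_characterizeI-approx}; the paper's is shorter and more concrete once one accepts the syzygy reduction, while yours handles all $i$ uniformly and makes transparent that the only input is the annihilation $IK=0$ propagated degreewise through the complex.
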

\begin{proof}
By definition, we can assume that there exist $R$-modules $U\subseteq Z$ such that $N\subseteq Z, N'=Z/U$, and a commutative diagram
\[
\xymatrix{
&   & Z \ar@{->>}[rd]^{\beta} & \\
&N~ \ar@{^{(}->}[ru]^{\alpha} \ar@{-->}[rr]^{\phi} &  &N' 
}
\]
where $\alpha, \beta$ are respectively natural inclusion and projection maps, such that $\Coker \alpha$ $=Z/N$ and $\Ker \beta=U$ are annihilated by $I$.

(i) Denote by $\phi'$ the natural map $WN\to WN'$. Note that $WN'=(WZ+GU)/GU$, so we have the following commutative diagram
\[
\xymatrix{
&   & WZ \ar@{->>}[rd]^{\beta'} & \\
&WN~ \ar@{^{(}->}[ru]^{\alpha'} \ar@{-->}[rr]^{\phi'} &  &WN' 
}
\]
where $\alpha'$ is injective and $\beta'$ is surjective. Clearly $\Coker \alpha'=WZ/WN$ and $\Ker \beta'=WZ\cap GU$ are annihilated by $I$. Hence $\phi'$ is an $I$-approximation.

(ii) By Remark \ref{rem_tensor_prod}, we may identify $N\otimes_R B$ with $GN/WN$ and $N'\otimes_R B$ with $GN'/WN'$. Denote $\pi=\phi\otimes_R B$. Observe that $GN'/WN'=GZ/(WZ+GU)$, so we have an induced commutative diagram
\[
\xymatrix{
&   & GZ/WN \ar@{->>}[rd]^{\overline{\beta}} & \\
&GN/WN~ \ar@{^{(}->}[ru]^{\overline{\alpha}} \ar@{-->}[rr]^{\pi} &  &GN'/WN' 
}
\]
where $\overline{\alpha}$ is injective and $\overline{\beta}$ is surjective. Clearly $\Coker \overline{\alpha}=GZ/GN$ and $\Ker \overline{\beta}=(WZ+GU)/WN$ are annihilated by $I$. Therefore $\pi$ is an $I$-approximation.

(iii) If $i\ge 1$, replacing $B$ by a suitable syzygy in a free resolution of it, we can assume that $i=1$. Since $\phi$ is surjective, we can write $N=F/U$, $N'=F/V$ where $F$ is a free $R$-module, $U\subseteq V\subseteq F$. Then
\[
\Tor^R_1(N,B) = \Ker(U\otimes_R B\to F\otimes_R B)= (GU\cap WF)/WU.
\]
Similarly 
$$\Tor^R_1(N',B) = (GV\cap WF)/WV.$$
Since $I$ kills $\Ker \phi=V/U$, it holds that $IV\subseteq U\subseteq V$. It remains to apply Proposition \ref{prop_characterizeI-approx}. 
\end{proof}

So combining the results obtained we can give an answer to Question \ref{notzero}: 

\begin{cor} 
\label{kerann}
Let $\phi: N\to N'$ be an $I$-approximation, then $I\Ker(\phi\otimes_R B)=0$ and $I\Coker(\phi\otimes_R B)=0$ for every $R$-module $B$.  
 \end{cor}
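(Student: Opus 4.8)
The plan is to deduce Corollary \ref{kerann} directly from Lemma \ref{lem_Torapprox}(ii). Indeed, that lemma asserts precisely that if $\phi\colon N\to N'$ is an $I$-approximation and $B=G/W$ is any $R$-module written as a quotient of a free module, then the induced map $\phi\otimes_R B\colon N\otimes_R B\to N'\otimes_R B$ is again an $I$-approximation. Since every $R$-module $B$ admits such a presentation $B=G/W$ (take $G$ to be a free module surjecting onto $B$ and $W$ the kernel), the hypothesis of part (ii) is satisfied for an arbitrary $B$. Thus $\phi\otimes_R B$ is an $I$-approximation for every $R$-module $B$.

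Having established that $\phi\otimes_R B$ is an $I$-approximation, I would invoke Lemma \ref{lem_approx}, which states that any $I$-approximation $\psi$ satisfies $I\Ker\psi=0$ and $I\Coker\psi=0$. Applying this to $\psi=\phi\otimes_R B$ yields immediately the two desired vanishing statements
\[
I\Ker(\phi\otimes_R B)=0 \quad\text{and}\quad I\Coker(\phi\otimes_R B)=0,
\]
valid for every $R$-module $B$. This completes the argument.

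There is essentially no obstacle here, since the corollary is a formal consequence of the two preceding lemmas; the real content lies upstream in Lemma \ref{lem_Torapprox}. One point worth flagging is purely expository: the corollary is phrased as resolving the question raised in Remark \ref{notzero}, where it was observed that for a general map $\phi$ with $I\Ker\phi=0$ and $I\Coker\phi=0$ one only gets $I^2\Ker(\phi\otimes_R B)=0$ rather than $I\Ker(\phi\otimes_R B)=0$, as witnessed by Example \ref{ex_notannihilated}. The stronger hypothesis that $\phi$ be an $I$-approximation is exactly what improves $I^2$ to $I$, and it is this improvement that the combination of Lemma \ref{lem_Torapprox}(ii) and Lemma \ref{lem_approx} delivers cleanly.
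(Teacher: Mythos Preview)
Your argument is correct and matches the paper's approach exactly: the corollary is stated immediately after Lemma~\ref{lem_Torapprox} as a direct consequence of part~(ii) together with Lemma~\ref{lem_approx}, with no further proof given. Your observation that every $R$-module admits a presentation $B=G/W$ with $G$ free is the only point that needs to be made explicit, and you handle it correctly.
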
 

In the remaining of this section $S$ will be  a polynomial ring over $k$ with the graded maximal ideal $\mm$.  
We will define generalized approximations and present regularity bounds obtained by means of them.  

Let $N$ be a finitely generated graded $S$-module and $t\ge 1$ an integer. A    {\it generalized approximation system of degree $t$ for $N$} is a collection of finitely generated graded $S$-modules $N_1,\ldots,N_d$ and degree-preserving homomorphisms $\phi_i: N\to N_i$  such that 
 \begin{enumerate}
 \item $\phi_i$ is an $I_i$-approximation for  $i=1,\ldots,d$,
 \item $\mm^t \subseteq I_1+\cdots+I_d$.
\end{enumerate}

We will say that   $N_1,\ldots,N_d$ is a generalized approximation system of degree $t$ for $N$
 if there are maps $\phi_i$ with the properties above.  

\begin{rem}
Derksen and Sidman \cite[Section 3]{DS2} defined approximation systems as follows. If $N$ is any finitely generated graded $S$-module, then an approximation system of degree $t$ for $N$ is a finite collection of graded $S$-modules $N_1,\ldots,N_d$ and surjections $\phi_i: N\to N_i$ for $i=1,\ldots,d$ which fulfill the following conditions:
\begin{enumerate}
 \item $I_i\Ker \phi_i=0$,
 \item $\mm^t\subseteq I_1+\cdots+I_d$.
\end{enumerate}
From Lemma \ref{lem_surjectivemap} we see that  any approximation system (in the sense of Derksen and Sidman)   is  a generalized approximation system (in our sense).
\end{rem}

Our next goal is to show that the  results in \cite{DS2} on  regularity bounds obtained using approximation systems can be extended to generalized approximation systems. Following \cite{Tr}, a homogeneous element $y\in S$ is called {\it $N$-filter-regular} if the map $N_m \xrightarrow{\cdot y} N_{m+\deg y}$ is injective for all $m\gg 0$. Denote $a_0(N)=\max\{i: H^0_{\mm}(N)_i\neq 0\}$. A useful feature of filter-regular elements is (see \cite[Proposition 1.2]{CH}): If $y$ is a linear form of $S$ which is $N$-filter-regular, then 
$$
\reg N=\max\left\{\reg (N/yN), a_0(N)\right\}.
$$
\begin{thm}
\label{thm_DS3.3}
Let $N$ be a finitely generated graded $S$-module. Let $N_1,\ldots,N_d$ be a generalized approximation system of degree $t$ for $N$ \textup{(}where $d,t\ge 1$\textup{)}. Suppose that $y\in S_1$ is a linear form which is filter-regular with respect to $N$ and $N_i$ for all $i=1,\ldots,d$. 
Then 
$$\reg N\leq \max \left\{  \max_{i=1,\ldots,d} \{ \reg N_i\}+1, \reg (N/yN) \right\}+t-1.$$
\end{thm}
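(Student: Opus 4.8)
The plan is to follow Derksen and Sidman's strategy, reducing the estimate on $\reg N$ to a bound on the top degree of the finite-length part of $N$, and then to control that part through the maps $\phi_i$. Write $r=\max_i\reg N_i$. Since $y$ is $N$-filter-regular, the formula recalled just before the statement gives
\[
\reg N=\max\{\reg(N/yN),\,a_0(N)\}.
\]
Because $t\ge 1$, the term $\reg(N/yN)$ is already $\le\reg(N/yN)+t-1$, which does not exceed the asserted right-hand side. Hence it suffices to prove the single estimate $a_0(N)\le r+t$: indeed this yields $\reg N\le\max\{\reg(N/yN),\,r+t\}\le\max\{r+1,\reg(N/yN)\}+t-1$. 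To handle $\reg(N/yN)$ uniformly, I would run a parallel induction on $\dim N$: tensoring the diagram $N\hookrightarrow Z_i\twoheadrightarrow N_i$ of Definition \ref{defn_approx} with $S/(y)$ and invoking Lemma \ref{lem_Torapprox}(ii) shows that $\{N_i/yN_i\}$ is again a generalized approximation system of degree $t$ for $N/yN$, with $\reg(N_i/yN_i)\le\reg N_i\le r$. Iterating along a filter-regular sequence therefore reduces every regularity appearing to a maximum of quantities of the form $a_0$, so the whole theorem comes down to the core bound $a_0(N)\le r+t$.

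The easy half of that core bound is to locate the finite-length part in high degrees inside the kernels of the $\phi_i$. If $\xi\in H^0_{\mm}(N)_b$, then $\mm^s\xi=0$ for some $s$, whence $\mm^s\phi_i(\xi)=0$ and $\phi_i(\xi)\in H^0_{\mm}(N_i)_b$. As $a_0(N_i)\le\reg N_i\le r$, for every $b>r$ we get $\phi_i(\xi)=0$, so that
\[
H^0_{\mm}(N)_b\subseteq C:=\bigcap_{i=1}^d\Ker\phi_i\qquad(b>r).
\]
By Lemma \ref{lem_approx} each $\Ker\phi_i$ is annihilated by $I_i$, hence $C$ is annihilated by $I_1+\cdots+I_d\supseteq\mm^t$. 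Thus the high-degree part of $H^0_{\mm}(N)$ lands in a submodule $C$ with $\mm^tC=0$.

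The remaining and main task is to convert the annihilation $\mm^tC=0$ into the degree bound $a_0(N)\le r+t$, and I expect this to be the principal obstacle. The upward annihilation alone is insufficient: it constrains how $C$ behaves under multiplication but gives no ceiling on its generators, so one cannot conclude $C_b=0$ for $b>r+t$ from it directly. The downward control must come from the \emph{cokernel} side of the approximations, that is from the intermediate modules $Z_i$ with $I_iZ_i\subseteq N$ (equivalently $I_i\Coker\alpha_i=0$), which is precisely the feature that distinguishes generalized approximations from the surjective systems of Derksen and Sidman. My plan is to induct on $t$, peeling off one factor of $\mm$ from the relation $\mm^t\subseteq\sum_iI_i$ and producing from the $Z_i$ a generalized approximation system of degree $t-1$ on an auxiliary module, so that the inductive hypothesis lowers the degree ceiling by one at each step. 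The two technical points that make this go through for non-surjective $\phi_i$ are that $I$-approximations are preserved under the relevant tensor and $\Tor$ operations (Lemma \ref{lem_Torapprox}) and, crucially, that they annihilate kernels and cokernels after tensoring by the \emph{single} ideal $I_i$ rather than by $I_i^2$ (Corollary \ref{kerann}); it is exactly this clean annihilation that prevents the non-surjective generalization from losing a factor in the degree count. I anticipate that the careful bookkeeping of these degree shifts, together with verifying that filter-regularity of $y$ is inherited by all the auxiliary modules produced along the way, will be the most delicate part of the argument.
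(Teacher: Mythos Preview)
Your setup through the display $H^0_{\mm}(N)_b\subseteq C=\bigcap_i\Ker\phi_i$ for $b>r$, with $\mm^tC=0$, is exactly the Derksen--Sidman opening and is correct. The gap is in what follows. You assert that ``the downward control must come from the \emph{cokernel} side of the approximations \ldots\ which is precisely the feature that distinguishes generalized approximations from the surjective systems,'' and you then sketch an induction on $t$ using the intermediate modules $Z_i$. This is a misdiagnosis: the paper's proof (and Derksen--Sidman's original) uses \emph{only} the kernel annihilation $I_i\Ker\phi_i=0$ supplied by Lemma~\ref{lem_approx}; the cokernel condition plays no role in this theorem. Consequently no new inductive machinery is needed, and the proof of \cite[Theorem~3.3]{DS2} goes through verbatim.

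The missing idea is that the downward control comes from the filter-regular element $y$, not from the $Z_i$. Since $H^0_\mm(N)$ maps to a finite-length submodule of $N/yN$, one has $H^0_\mm(N)_b\subseteq yN$ for $b>\reg(N/yN)$; and because $y$ is $N$-filter-regular, any preimage again lies in $H^0_\mm(N)$, so in fact $H^0_\mm(N)_b=y\cdot H^0_\mm(N)_{b-1}$ for such $b$. Iterating $t$ times gives $H^0_\mm(N)_b=y^t H^0_\mm(N)_{b-t}$ whenever $b-t+1>\reg(N/yN)$. If in addition $b-t>r$, then $H^0_\mm(N)_{b-t}\subseteq C$ and $y^t\in\mm^t$ kills $C$, so $H^0_\mm(N)_b=0$. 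Hence $a_0(N)\le\max\{r+t,\ \reg(N/yN)+t-1\}$, which is precisely the bound needed. Note that this is the (possibly weaker) inequality the theorem actually requires; your stronger target $a_0(N)\le r+t$ is unnecessary and is what led you to look for extra input. Likewise, the ``parallel induction on $\dim N$'' to control $\reg(N/yN)$ is not part of this statement at all; that descent is the content of Theorem~\ref{thm_DS3.5}, not of the present result.
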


\begin{proof}
The proof of \cite[Theorem 3.3]{DS2} carries over verbatim, taking Lemma \ref{lem_approx} into consideration. Note that the condition $M\to M_i$ is surjective for $i=1,\ldots,d$ in that result is not necessary. The only crucial conditions are that $I_i$ annihilates the kernel of the map $M\to M_i$ for every $i$, and $\mm^t\subseteq I_1+\cdots+I_d$.
\end{proof}
The following result is the most important feature of generalized approximation systems. We will  use it only in the case $t=1$.
\begin{thm}
\label{thm_DS3.5}
Let $n=\dim S$. Let $N$ be a finitely generated graded  $S$-module  and $N_1,\ldots,N_d$ be a generalized approximation system of degree $t$ for $N$. Then: 
$$\reg N\leq   \max\left\{  \max_{i=1,\ldots,d} \{ \reg N_i\}+1,  b \right\}+(t-1)n$$
where $b$ is the largest degree of a minimal generator of $N$. 
\end{thm}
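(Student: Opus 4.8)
The plan is to reduce Theorem \ref{thm_DS3.5} to the already-proven Theorem \ref{thm_DS3.3} by induction on the dimension $n=\dim S$, using a generic linear form to perform the dimension reduction. The key tension is that Theorem \ref{thm_DS3.3} requires a linear form $y$ that is simultaneously filter-regular with respect to $N$ and all the $N_i$, and it bounds $\reg N$ in terms of $\reg(N/yN)$; to iterate, I need $N/yN$ to carry a generalized approximation system of the same degree $t$ over the smaller polynomial ring $S/yS$, so that the inductive hypothesis applies to it.

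First I would set up the induction. When $n=1$ (or more generally once $\dim$ is small enough that $\mm$ itself is nilpotent-like), the modules in question are essentially governed by their generator degrees and the finite-length bound from Lemma \ref{lem_finitelength}, giving the base case. For the inductive step, I would choose a linear form $y\in S_1$ that is filter-regular with respect to $N$ and every $N_i$ simultaneously; such a $y$ exists because $k$ is infinite (or after a harmless field extension, which does not affect regularity), since filter-regularity for each finitely generated module fails only on a proper Zariski-closed subset of $S_1$, and a finite union of such subsets is still proper. With this $y$ in hand, Theorem \ref{thm_DS3.3} gives
\[
\reg N \le \max\left\{ \max_{i} \{\reg N_i\}+1,\ \reg(N/yN)\right\}+t-1.
\]

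The heart of the argument is then to control $\reg(N/yN)$ by descending to $\bar S = S/yS$ and applying the inductive hypothesis there. I would show that the reductions $\bar N_i = N_i/yN_i$, together with the induced maps $\bar\phi_i\colon N/yN \to N_i/yN_i$, form a generalized approximation system of degree $t$ for $N/yN$ over $\bar S$. This is exactly where the \emph{flexibility} of generalized approximations (as opposed to the surjective approximations of Derksen--Sidman) is indispensable: reducing modulo $y$ does not preserve surjectivity of maps, but by Lemma \ref{lem_Torapprox}(ii) applied with $B = \bar S = S/yS$, the map $\phi_i \otimes_S \bar S$ is still an $I_i$-approximation, so each $\bar\phi_i$ is an $I_i\bar S$-approximation over $\bar S$. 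The containment $\mm^t \subseteq I_1+\cdots+I_d$ descends to $\bar\mm^t \subseteq I_1\bar S+\cdots+I_d\bar S$, so condition (2) is preserved. The inductive hypothesis over $\bar S$ (with $\dim \bar S = n-1$) then yields
\[
\reg_{\bar S}(N/yN) \le \max\left\{ \max_{i}\{\reg_{\bar S}(N_i/yN_i)\}+1,\ \bar b\right\}+(t-1)(n-1),
\]
where $\bar b$ is the top generator degree of $N/yN$.

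The main obstacle, and the step I would treat most carefully, is reconciling the regularities and generator degrees computed over $\bar S$ with those over $S$. Since $y$ is $N$-filter-regular and $N_i$-filter-regular, the standard inequalities $\reg(N/yN) \le \reg N$ and, crucially, $\reg_{\bar S}(N_i/yN_i) \le \reg_S N_i$ (from the short exact sequence $0\to (0:_{N_i} y)\to N_i(-1)\xrightarrow{y} N_i \to N_i/yN_i\to 0$ together with the finite-length control on $(0:_{N_i}y)$ via $a_0(N_i)$) let me replace the barred regularities by the unbarred ones. Likewise $\bar b \le b$, since generators of $N/yN$ lift to generators of $N$. Substituting these bounds back and combining the $(t-1)(n-1)$ term picked up inductively with the $t-1$ term from Theorem \ref{thm_DS3.3}, the two contributions add to $(t-1)n$, which is precisely the exponent in the target inequality. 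The bookkeeping that $(t-1)+(t-1)(n-1) = (t-1)n$ and that the two ``$\max$'' expressions collapse correctly is routine but must be checked to confirm the stated constant; for the application $t=1$ this additive term vanishes entirely, which is why only the $t=1$ case is needed downstream.
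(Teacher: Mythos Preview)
Your proposal is correct and follows essentially the same route as the paper, which simply says that the proof of \cite[Theorem 3.5]{DS2} carries over verbatim once one invokes Theorem \ref{thm_DS3.3} and uses Lemma \ref{lem_Torapprox}(ii) to see that $N/KN$ inherits the generalized approximation system $N_1/KN_1,\ldots,N_d/KN_d$ for any homogeneous ideal $K$ (in particular $K=(y)$).

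One small correction to your commentary: the claim that ``reducing modulo $y$ does not preserve surjectivity of maps'' is false, since tensoring is right exact; thus the added flexibility of \emph{generalized} approximations is not what is needed at this particular step. Lemma \ref{lem_Torapprox}(ii) is invoked only to check that the $I_i$-approximation \emph{structure} (the factorization with the required annihilation conditions) survives tensoring, and this is an issue regardless of whether the original $\phi_i$ are surjective. The genuine need for non-surjective approximations arises later, in the proof of Theorem \ref{thm_reg_quotient}.
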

\begin{proof} 
The argument of the proof \cite[Theorem 3.5]{DS2} carries over  verbatim using Theorem \ref{thm_DS3.3} and  that, thanks to Lemma \ref{lem_Torapprox}(ii), the module $N/KN$ admits $N_1/KN_1,\ldots,N_d/KN_d$ as a generalized approximation system of degree $t$ for any homogeneous ideal $K$ of $S$.
\end{proof}


\section{Quadric hypersurfaces}
\label{sect_quadric}
In this section $S$ will denote a standard graded polynomial ring over  a field $k$ with the graded maximal ideal $\mm$. Throughout the section we fix  $I_1,\ldots,I_d$ ideals generated by linear forms, i.e.~$I_i\in \Lscr(S)$, where $d\ge 1$. Furthermore we set  $I=I_1\cdots I_d$.

Denote $[d]=\{1,2,\ldots,d\}$.  For each non-empty index set $A\subseteq [d]$, we let $I_A=\sum_{i\in A}I_i$.  For any $i=1,\dots, d$ we set $J_i=\prod_{j\neq i}I_j$ and more generally for  $1\le i_1<\cdots <i_p\le d$ we set 
$$
J_{i_1,\ldots,i_p} = \prod_{j\not\in \{i_1,\ldots,i_p\} } I_j.
$$
Our main result is the following: 
\begin{thm}
\label{thm_quadric}
 Let $f\in S$ be a quadric  and $R=S/(f)$. Then $R$ is $\Tor$-linear with respect to $\Lscr^\infty(S)$. In particular, $R$ is universally$^*$ Koszul. 
 \end{thm}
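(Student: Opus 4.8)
The plan is to reduce Theorem \ref{thm_quadric} to the statement, flagged earlier in the excerpt as Theorem \ref{thm_reg_quotient}, that $\creg_S(S/I, S/(f)) = \reg S/I + \reg S/(f)$ for $I \in \Lscr^\infty(S)$ and $f$ homogeneous. Granting this equality, Tor-linearity with respect to $\Lscr^\infty(S)$ is almost immediate: since $\deg f = 2$, the quotient $R = S/(f)$ has $\reg R = \reg S/(f) = 1$ (the resolution $0 \to S(-2) \to S \to S/(f) \to 0$ shows this). Hence for any $I \in \Lscr^\infty(S)$ we get $\creg_S(S/I, R) = \reg S/I + 1 = \reg(S/I) + 1$, which is exactly the Tor-linearity inequality (in fact an equality) defining Tor-linearity with respect to $\Lscr^\infty(S)$. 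The final clause, that $R$ is universally$^*$ Koszul, then follows directly from Proposition \ref{prop_sufficient_condition}(ii), since $R$ is Koszul (every quadric hypersurface is, being defined by a single quadric).

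The substance of the proof therefore lies entirely in establishing the mixed-regularity equality, and here I would invoke the machinery built in Section \ref{sect_generalizedapprox}. The strategy is to produce, for the relevant module $N$ (a suitable Tor module or syzygy built from $R = S/(f)$ tensored against $S/I$), a generalized approximation system of degree $t=1$ whose pieces $N_i$ have controlled regularity, and then apply Theorem \ref{thm_DS3.5} in the case $t=1$ to bound $\reg N$ by $\max\{\max_i \reg N_i + 1, b\}$. The natural candidates for the approximation maps exploit the combinatorial decomposition set up at the start of Section \ref{sect_quadric}: one uses the ideals $I_i$ and the partial products $J_{i_1,\ldots,i_p}$, together with the single relation $f$, to build maps $\phi_i \colon N \to N_i$ that are $I_i$-approximations (verified via Proposition \ref{prop_characterizeI-approx} or Lemma \ref{lem_surjectivemap}) and satisfy $\mm \subseteq I_1 + \cdots + I_d$ when the $I_i$ jointly generate enough linear forms — or more precisely one reduces to a situation where the linear forms appearing in the $I_i$ span, after an appropriate localization/filter-regular reduction, the maximal ideal up to radical. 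Lemma \ref{lem_Torapprox}(iii) is the key transfer tool here: it guarantees that applying $\Tor^S_i(-, S/(f))$ to a surjective $I_i$-approximation yields again an $I_i$-approximation, so the approximation structure descends to the Tor modules one actually wants to bound.

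The main obstacle I expect is the delicate bookkeeping in the induction that feeds Theorem \ref{thm_reg_quotient}: one must simultaneously track the regularity of $S/I$, the regularity of the various Tor modules $\Tor^S_i(S/I, S/(f))$, and ensure that the approximation pieces $N_i$ are themselves of the form $S/I'$ for a smaller product $I'$ of linear ideals (so that the inductive hypothesis applies to them). Controlling the degree shift $-i$ in $\creg_S$ against the $+1$ coming from $\reg S/(f)$, while passing through the $\Tor^S_i(S/I, R) = \Tor^S_{i-1}(I, R)$ identification used in the Remark after the definition of Tor-linearity, requires that the generalized (non-surjective) approximations behave well — which is precisely why the flexible version developed in Section \ref{sect_generalizedapprox}, rather than Derksen and Sidman's original surjective version, is needed. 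The quadratic hypothesis $\deg f = 2$ enters in keeping $\reg S/(f) = 1$ minimal, so that the error terms in Theorem \ref{thm_DS3.5} collapse and the inequality $\creg_S(S/I, R) \le \reg S/I + 1$ is not merely bounded but saturated by Chardin's lower bound in Theorem \ref{thm_Chardin}, yielding the exact equality.
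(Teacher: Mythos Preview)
Your reduction in the first paragraph is correct and matches the paper's proof of Theorem~\ref{thm_quadric} exactly. The substance, as you say, lies in Theorem~\ref{thm_reg_quotient}, and your sketch of that argument has a genuine gap: the crucial missing ingredient is Lemma~\ref{lem_colon_product}, which shows that $I:f$ is generated inside the subring $k[V]$ where $V=\sum_i V_i$. This is what allows the paper to compute the relevant regularities over $k[V]$, whose graded maximal ideal \emph{is} $I_1+\cdots+I_d$, and thereby obtain a degree-$1$ generalized approximation system outright. Your proposed route---some ``localization/filter-regular reduction'' so that the $I_i$ span $\mm$ up to radical---is too vague to carry the argument, and is not how the paper proceeds; without the colon-ideal lemma (which in turn rests on the primary decomposition of Lemma~\ref{lem_primarydecomp}) there is no evident way to force $\mm^1 \subseteq I_1+\cdots+I_d$.

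A few smaller misidentifications: the approximation pieces are not of the form $S/I'$ for smaller products $I'$, but rather explicit subquotients $M_i$, $N_t$, $P_{1,j}$ built from intersections of the partial products $J_{i_1,\ldots,i_p}$ with colon ideals $J_\bullet:f$, and the induction on $d$ runs simultaneously on two families of such modules (statements (S1) and (S2) in the paper's proof). Lemma~\ref{lem_Torapprox}(iii) is not the transfer tool here; the $I_j$-approximations are verified directly via Proposition~\ref{prop_characterizeI-approx}, while Lemma~\ref{lem_Torapprox} is used only inside the proof of Theorem~\ref{thm_DS3.5}. Finally, $\deg f=2$ plays no role in Theorem~\ref{thm_reg_quotient}---that equality holds for arbitrary homogeneous $f$; the quadric hypothesis enters only in your (correct) first paragraph, to turn $\reg S/(f)$ into the constant~$1$.
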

 
Indeed, we prove the following more general statement.
\begin{thm}
\label{thm_reg_quotient}
Let $f\neq 0$ be a homogeneous form of positive degree of $S$. Then there is an equality
$$
\creg_S\left(S/I,S/(f)\right) =\reg S/I+\reg S/(f).
$$
\end{thm}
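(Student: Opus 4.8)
The plan is to establish the nontrivial inequality $\creg_S(S/I,S/(f))\le \reg S/I+\reg S/(f)$, the reverse being immediate from Chardin's Theorem \ref{thm_Chardin}. Writing $e=\deg f$, the module $S/(f)$ has the two-term resolution $0\to S(-e)\xrightarrow{f}S\to S/(f)\to 0$, so tensoring with $S/I$ gives $\Tor^S_i(S/I,S/(f))=0$ for $i\ge 2$, while $\Tor_0=S/(I+(f))$ and $\Tor_1\cong ((I:f)/I)(-e)$. Since $\reg S/(f)=e-1$, the target bound is equivalent to the two estimates
\begin{align*}
\reg\bigl(S/(I+(f))\bigr) &\le \reg S/I+e-1,\\
\reg\bigl((I:f)/I\bigr) &\le \reg S/I.
\end{align*}
I would prove both by induction on the number $d$ of linear factors of $I$, the base case $d=1$ (where $S/I$ is a polynomial ring and $f$ is either zero or a nonzerodivisor modulo $I$) being a direct computation.

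For the inductive step I would use the surjections $\phi_i\colon S/I\twoheadrightarrow S/J_i$ with $J_i=\prod_{j\ne i}I_j$; since $I=I_iJ_i$, the kernel $J_i/I$ is killed by $I_i$, so each $\phi_i$ is a surjective $I_i$-approximation by Lemma \ref{lem_surjectivemap}. After reducing harmlessly, by means of filter-regular linear forms, to the nondegenerate case in which the linear forms involved span $S_1$, one has $I_1+\cdots+I_d=\mm$, so $\{S/J_i\}$ is a generalized approximation system of degree $1$ for $S/I$. The crucial device is Lemma \ref{lem_Torapprox}: its parts (ii) and (iii) transport this system along $-\otimes S/(f)$ to yield, for each fixed homological degree $j$, a generalized approximation system $\{\Tor^S_j(S/J_i,S/(f))\}_i$ of degree $1$ for $\Tor^S_j(S/I,S/(f))$. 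Theorem \ref{thm_DS3.5} with $t=1$ then gives
\[
\reg \Tor^S_j(S/I,S/(f))\le \max\Bigl\{\max_i \reg \Tor^S_j(S/J_i,S/(f))+1,\ b_j\Bigr\},
\]
where $b_j$ is the top degree of a minimal generator of $\Tor^S_j(S/I,S/(f))$, and the inner regularities are controlled by the inductive hypothesis applied to $J_i$ (a product of $d-1$ factors, so $\reg S/J_i=\reg S/I-1$).

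For $j=0$ this closes at once: $\Tor_0=S/(I+(f))$ is cyclic, hence $b_0=0$, and the inductive estimate on $\reg\bigl(S/(J_i+(f))\bigr)$ delivers the first displayed inequality. For $j=1$ the same scheme applies to $\Tor_1\cong((I:f)/I)(-e)$: one checks directly, via the presentation $P=(I:f)$, $M=I$, $P'=(J_i:f)$, $M'=J_i$ of Proposition \ref{prop_characterizeI-approx} (using $I_i(J_i:f)\subseteq(I:f)$), that the induced maps $(I:f)/I\to(J_i:f)/J_i$ are $I_i$-approximations even though they are neither injective nor surjective. This is exactly the situation for which the flexibility of generalized approximations is indispensable.

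The point that resists this machinery is the generator-degree term $b_1$: the bound from Theorem \ref{thm_DS3.5} is vacuous unless one independently controls the top degree of a minimal generator of $(I:f)/I$ by $\reg S/I$, and I expect this to be the main obstacle. I would approach it through the exact sequence $0\to(I:f)/I\to S/I\to S/(I:f)\to 0$, whose long exact $\Tor(-,k)$ sequence bounds the positive-degree generators of $(I:f)/I$ by $\reg\bigl(S/(I:f)\bigr)+1$, thereby reducing the whole theorem to the estimate $\reg\bigl(S/(I:f)\bigr)\le\reg S/I-1$ whenever $f$ is a zerodivisor modulo $I$. Establishing this last bound is delicate precisely because $(I:f)$ need not itself be a product of ideals of linear forms; I would handle it by a companion induction resting on the colon formula for products of ideals of linear forms together with reduction by a filter-regular element through Theorem \ref{thm_DS3.3}.
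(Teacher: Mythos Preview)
Your overall strategy --- reduce to bounding $\reg\bigl((I{:}f)/I\bigr)\le d-1$, transport the surjective $I_i$-approximations $S/I\twoheadrightarrow S/J_i$ to $\Tor_1$ via Lemma \ref{lem_Torapprox}, and apply Theorem \ref{thm_DS3.5} --- is sound up to the point you yourself flag: the generator-degree term $b_1$. Your proposed fix, however, does not work. The claimed inequality $\reg\bigl(S/(I{:}f)\bigr)\le d-2$ is \emph{false}. Take $S=k[x,y,z]$, $I_1=(x,y)$, $I_2=(y,z)$, $d=2$, and $f=x-z$. From the primary decomposition $I=I_1\cap I_2\cap(x,y,z)^2$ one computes $I{:}f=(x,y)\cap(y,z)=(y,xz)$, so $S/(I{:}f)\cong k[x,z]/(xz)$ has regularity $1>0=d-2$. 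Thus your bound $b_1\le \reg\bigl(S/(I{:}f)\bigr)+1$ only yields $b_1\le 2$, not the required $b_1\le 1$. (In fact $b_1=1$ here, since $xz\in I$ and $(I{:}f)/I$ is generated by $\bar y$; but your route cannot see this.) Since $(I{:}f)$ is not itself a product of ideals of linear forms, the ``companion induction'' you allude to has no obvious inductive structure to exploit, and the proposal stalls precisely at the step you identified as delicate.

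The paper circumvents the generator-degree problem altogether by never trying to bound $b_1$ for $(I{:}f)/I$ directly. Instead it filters $(I{:}f)/I=M_0\supseteq M_1\supseteq\cdots\supseteq M_d$ by $M_i=\bigl((I{:}f)\cap J_1\cap\cdots\cap J_i\bigr)/I$, and proves simultaneously (S1) $\reg M_i\le d-1$ and (S2) $\reg N_i\le d-1$ for the subquotients $N_i=M_i/M_{i+1}$. The bottom $M_d$ is handled by observing (via Lemmas \ref{lem_colon_product} and \ref{lem_primarydecomp}) that it has finite length over $k[V]$, where $V=\sum V_i$. For each $N_i$ one writes $0\to N_i\to P_0\to P_1\to 0$ where $P_0$ is an $M$-type module for a product of $d-1$ factors, so $\reg P_0\le d-2$ by induction on $d$; the crucial point is that $P_1$, being a \emph{quotient} of $P_0$, is automatically generated in degree $\le d-2$, so the $b$-term in Theorem \ref{thm_DS3.5} is under control when the generalized approximation machinery is applied to $P_1$. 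The non-surjective $I_j$-approximations (built via Proposition \ref{prop_characterizeI-approx}) are used on $P_1$, not on $(I{:}f)/I$ itself. This filtration-then-approximate strategy is the missing idea in your proposal.
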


The proof of Theorem \ref{thm_reg_quotient} employs generalized approximation as well as the next lemma.
\begin{lem}
\label{lem_colon_product}
For $i=1,\dots, d$ let $V_i\subseteq  S_1$ be the vector space of linear forms that generates the ideal $I_i$ and $V=\sum_{i=1}^d V_i$. Then the ideal $I:f$ is generated by elements that belong to the polynomial subring $k[V]$ of $S$.
\end{lem}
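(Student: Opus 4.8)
The goal is equivalent to showing that $I:f$ is \emph{extended} from the subring $A:=k[V]$. Concretely, choose a $k$-basis $x_1,\dots,x_r$ of $V$ and extend it to a basis $x_1,\dots,x_r,y_1,\dots,y_s$ of $S_1$, so that $A=k[x_1,\dots,x_r]$ and $S=A[y_1,\dots,y_s]$ is a polynomial extension. Since each $V_i\subseteq V$, every $I_i$ is extended from $A$, and hence so is $I=I_0S$, where $I_0:=\prod_{i=1}^d(I_i\cap A)\subseteq A$ is a product of ideals of linear forms in $A$. The inclusion $((I:f)\cap A)S\subseteq I:f$ is automatic, and an element of $A$ lies in $I:f$ iff it lies in $(I:f)\cap A$; thus the lemma is precisely the equality $I:f=((I:f)\cap A)S$. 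Writing $g=\sum_\beta g_\beta y^\beta$ with $g_\beta\in A$ and expanding $gf\in I_0S=\bigoplus_\beta I_0y^\beta$ coefficientwise, one sees that this in turn amounts to a \emph{deconvolution} statement over $\bar A:=A/I_0$: if $\bar g\,\bar f=0$ in $\bar A[y_1,\dots,y_s]$ then $\bar g_\beta\bar f_\gamma=0$ for all $\beta,\gamma$. For a general Artinian base $\bar A$ this fails (one can produce $\bar f=\bar a+\bar b y$, $\bar g=\bar c+\bar d y$ with $\bar f\bar g=0$ yet $\bar a\bar d\neq 0$), so the product-of-linear structure of $I_0$ must be used in an essential way.

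The plan is to isolate one associated prime at a time by flat base change. As $S$ is free, hence faithfully flat, over $A$, a primary decomposition $I_0=\bigcap_t Q_t$ in $A$ extends to $I=\bigcap_t Q_tS$ with each $Q_tS$ primary, and colon distributes over intersection, so $I:f=\bigcap_t(Q_tS:f)$. Since finite intersections of extended ideals are extended (again by flatness), it suffices to prove that each $Q_tS:f$ is extended. Here I would invoke the structural fact that every associated prime $\mathfrak q_t$ of a product of ideals of linear forms is itself generated by linear forms, so that $\mathfrak q_t\subseteq A$ and $\mathfrak q_tS$ is prime. If $f\notin\mathfrak q_tS$, then $f$ is a nonzerodivisor modulo the $\mathfrak q_tS$-primary ideal $Q_tS$, whence $Q_tS:f=Q_tS$ is extended and there is nothing to prove.

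The crux is the case $f\in\mathfrak q_tS$. Here both $L:=Q_tS:f$ and its extended hull $(L\cap A)S$ are $\mathfrak q_tS$-primary ideals with the same contraction $L\cap A$ to $A$, so they coincide as soon as they agree after localizing at $\mathfrak q_tS$; this allows me to replace $A$ by the regular local ring $A_{\mathfrak q_t}$. The gain is that $I_0A_{\mathfrak q_t}$ is again a product of ideals of linear forms (the factors $I_i\cap A$ with $V_i\not\subseteq\mathfrak q_t$ become the unit ideal), now with the maximal ideal of $A_{\mathfrak q_t}$ as the relevant associated prime, so the problem is reduced to the situation where $\mathfrak q_t$ is the graded maximal ideal and $\bar A=A/Q_t$ is Artinian. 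I expect this final, local deconvolution over an Artinian quotient by a product of linear ideals -- equivalently, the control of $Q_tS:f$ for $f$ in the nilradical -- to be the main obstacle: it is exactly where the explicit structure of the primary components of products of linear forms must enter to guarantee that the colon acquires no genuinely $y$-dependent generators, thereby ruling out the pathology available over an arbitrary Artinian base.
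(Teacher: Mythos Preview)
Your proposal is incomplete, and you say so yourself: the final paragraph describes the case $f\in\mathfrak q_tS$ as ``the main obstacle'' and leaves it unresolved. The localization manoeuvre you sketch does not close the gap; it only reformulates the question over an Artinian base, where (as you note) the desired deconvolution fails for arbitrary primary components $Q_t$. So the argument, as written, does not prove the lemma.

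The missing ingredient is precisely the ``explicit structure of the primary components'' you allude to but never invoke. The paper uses the Conca--Herzog primary decomposition
\[
I=\bigcap_{\emptyset\neq A\subseteq [d]} I_A^{|A|},\qquad I_A=\sum_{i\in A} I_i,
\]
so that each primary component is a \emph{power} of a prime generated by linear forms. Since $I_A$ is a complete intersection prime, the associated graded ring $\mathrm{gr}_{I_A}(S)$ is a polynomial ring over the domain $S/I_A$, hence a domain; it follows that $I_A^{n}:f$ is again a power of $I_A$ (namely $I_A^{\max(n-k,0)}$, where $k$ is the $I_A$-adic order of $f$), or all of $S$. Thus every $I_A^{|A|}:f$ is extended from $k[V]$, and so is their intersection $I:f$. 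This handles both of your cases simultaneously and bypasses the Artinian reduction entirely. Your framework (distribute the colon over a primary decomposition, show each piece is extended) is exactly the paper's; what you lacked was the specific choice of primary components that makes the colon computation immediate.
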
 
To prove Lemma \ref{lem_colon_product}, the following result will  be useful.

\begin{lem}[Conca--Herzog, {\cite[Lemma 3.2 and its proof]{CH}}]
\label{lem_primarydecomp}
One has 
$$I=J_1\cap \dots \cap J_d \cap I_{[d]}^d$$ and,   furthermore, one has  the following \textup{(}possibly redundant\textup{)} primary decomposition:
\[
I= \bigcap_{A\subseteq [d], A\neq \emptyset} I_A^{|A|}. 
\]
\end{lem}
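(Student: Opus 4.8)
The plan is to prove both displayed equalities by a single induction on $d$, treating the first as the essential step and deducing the second from it. I begin by recording the structural facts. Each $I_A=\sum_{i\in A}I_i$ is generated by linear forms, so $S/I_A$ is again a polynomial ring; hence $I_A$ is prime and each power $I_A^{|A|}$ is $I_A$-primary. Thus, once the set-theoretic equality $I=\bigcap_{\emptyset\neq A\subseteq[d]}I_A^{|A|}$ is known, it is automatically a (possibly redundant) primary decomposition. To pass from the first display to the second, note that each $J_i$ is the product of the $d-1$ linear ideals $\{I_j\}_{j\neq i}$, so the inductive hypothesis gives $J_i=\bigcap_{\emptyset\neq B\subseteq[d]\setminus\{i\}}I_B^{|B|}$. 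Since a nonempty $B\subseteq[d]$ is a proper subset exactly when it omits some index $i$, intersecting over $i$ yields $J_1\cap\cdots\cap J_d=\bigcap_{\emptyset\neq B\subsetneq[d]}I_B^{|B|}$, and adjoining the factor $I_{[d]}^d$ supplies the missing term $A=[d]$. Hence the first display implies the second, while the base case $d=1$ (where $J_1=S$ and $I_{[1]}=I_1$) is immediate.

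It therefore remains to prove $I=J_1\cap\cdots\cap J_d\cap I_{[d]}^d$. The inclusion $\subseteq$ is easy: $I=I_iJ_i\subseteq J_i$ for every $i$, and $I_j\subseteq I_{[d]}$ for all $j$ forces $I\subseteq I_{[d]}^d$. The content is the reverse inclusion, which I would prove by induction on $d$. The case $d=2$ already exhibits the mechanism: here the claim is $I_1\cap I_2\cap(I_1+I_2)^2\subseteq I_1I_2$. Writing an element of $(I_1+I_2)^2=I_1^2+I_1I_2+I_2^2$ as $a+b+c$ with $a\in I_1^2$, $b\in I_1I_2$, $c\in I_2^2$, and using $b\in I_1\cap I_2$ together with the membership in $I_1\cap I_2$, one reduces to the two symmetric statements $I_1\cap I_2^2\subseteq I_1I_2$ and $I_2\cap I_1^2\subseteq I_1I_2$. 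Each of these involves only the two subspaces $V_1,V_2\subseteq S_1$, which can always be put into coordinate form simultaneously: choosing a basis of $S_1$ that extends a basis of $V_1\cap V_2$ to bases of $V_1$ and of $V_2$ and then to all of $S_1$ turns $I_1,I_2$ into monomial ideals, after which $I_1\cap I_2^2\subseteq I_1I_2$ is a direct monomial computation.

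For the inductive step I would pass to the polynomial ring $\bar S=S/I_d$, in which the images $\bar I_j$ are again ideals of linear forms and $\overline{J_d}=\bar I_1\cdots\bar I_{d-1}$ is controlled by the inductive hypothesis; one then combines the decomposition of $J_d$ with the remaining memberships in the $J_i$ for $i<d$ and in $I_{[d]}^d$, and lifts an element back into $I=I_dJ_d$ by a colon computation feeding into the short exact sequence $0\to I\to J_d\to J_d/I_dJ_d\to 0$. The main obstacle is precisely this reverse inclusion for $d\ge3$: unlike the two-subspace case, three or more subspaces of $S_1$ cannot in general be put into coordinate form simultaneously (already three lines in general position in a plane fail), so the problem does not collapse to a single monomial computation and genuinely requires the coordinate-free inductive argument of Conca and Herzog \cite{CH}. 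Arranging the lifting step so that it respects all the $|A|$-fold vanishing conditions encoded by the top component $I_{[d]}^d$ is the delicate point.
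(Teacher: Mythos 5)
Your reductions are correct as far as they go: the observation that each $I_A$ is a complete intersection prime whose powers are $I_A$-primary, the derivation of the second display from the first by induction on $d$ (using $J_i=\bigcap_{\emptyset\neq B\subseteq[d]\setminus\{i\}}I_B^{|B|}$ and the fact that every proper nonempty $A\subsetneq[d]$ omits some $i$), the easy inclusion $I\subseteq J_1\cap\cdots\cap J_d\cap I_{[d]}^d$, and the case $d=2$ via simultaneous monomialization of two subspaces of $S_1$ are all sound. But the heart of the lemma --- the inclusion $J_1\cap\cdots\cap J_d\cap I_{[d]}^d\subseteq I$ for $d\ge 3$ --- is never established. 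Your inductive step is a gesture (pass to $S/I_d$, an unspecified colon computation, the sequence $0\to I\to J_d\to J_d/I_dJ_d\to 0$) that you yourself concede is incomplete, ending with an appeal to ``the coordinate-free inductive argument of Conca and Herzog.'' Deferring the one genuinely hard step to the cited source means the proposal contains no proof of the statement. For what it is worth, the paper itself also gives no proof: the lemma is quoted with the attribution \cite[Lemma 3.2 and its proof]{CH}, so your text amounts to that same citation plus some verified special cases.

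The idea you are missing is that no induction (and no coordinate normalization) is needed for the first display once one invokes Theorem \ref{thm_ConcaHerzog}, which the paper has already quoted; this is essentially how \cite{CH} argue. All the ideals involved are extended from the subring $T=k[V]$ with $V=\sum_i V_i$, and $S$ is a faithfully flat polynomial extension of $T$ under which products, intersections and colons of extended ideals are preserved, so one may assume $I_{[d]}=\mm$. Then $I_i\bigl(J_1\cap\cdots\cap J_d\bigr)\subseteq I_iJ_i=I$ for each $i$, hence $\mm\bigl(J_1\cap\cdots\cap J_d\bigr)\subseteq I$; since a homogeneous element of $J_1\cap\cdots\cap J_d\cap\mm^d$ has degree at least $d$, this gives $J_1\cap\cdots\cap J_d\cap\mm^d\subseteq (I:\mm)_{\ge d}$. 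By Theorem \ref{thm_ConcaHerzog} the ideal $I$ has a $d$-linear resolution, so $\reg S/I=d-1$, and therefore $H^0_{\mm}(S/I)$ --- which contains $(I:\mm)/I$ --- vanishes in degrees $\ge d$; hence $(I:\mm)_{\ge d}=I_{\ge d}$ and the inclusion follows. This regularity-based argument is precisely what your monomialization route cannot reach for $d\ge 3$: as you correctly note, three or more subspaces of $S_1$ need not be simultaneously monomializable, so the problem was never going to collapse to a monomial computation, and the substitute mechanism is the regularity bound, not a finer lifting scheme through $S/I_d$.
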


\begin{proof}[Proof of Lemma \ref{lem_colon_product}]
By \ref{lem_primarydecomp} we have that $I:f$ is the intersection of $I_A^{|A|}:f$ where $A\subseteq [d], A\neq \emptyset$.  Since $I_A$ is generated by linear forms it is a prime complete intersection. Hence the powers of $I_A$ are primary and $I_A^{|A|}:f$ is a power of $I_A$.  Therefore $I:f$ is an intersection of powers of $I_A$ as $A$ varies in the set of the non-empty  subsets of $[d]$.  Since each $I_A$ is generated by elements of $k[V]$, the claim follows. 
\end{proof}

Next we come to the proof of Theorem \ref{thm_reg_quotient}.
\begin{proof}[Proof of Theorem \ref{thm_reg_quotient}]
Denote $p=\deg(f)$. Since $\pdim_S S/(f)=1$, we have $\Tor^S_i(S/I,S/(f))=0$ for $i\ge 2$. Together with Theorems \ref{thm_ConcaHerzog} and \ref{thm_Chardin}, the desired equality is equivalent to
\[
\max \left\{\reg \frac{S}{I+(f)}, \reg \Tor^S_1(S/I,S/(f))-1 \right \} \le d+p-2.
\]
By Proposition \ref{prop_compare_reg} for $M=S/I$ and $N=S/(f)$,
\[
\reg \frac{S}{I+(f)} \le \max \left\{d+p-2, \reg \Tor^S_1(S/I,S/(f))-2 \right\}.
\]
Hence it suffices to show that $\reg \Tor^S_1(S/I,S/(f))$ $\le d+p-1$, equivalently $\reg (I:f)/I \le d-1$, noting that $\Tor^S_1(S/I,S/(f))\cong \bigl((I:f)/I\bigr)(-p)$.

For every $i=0,\ldots,d$ and  $0\le t\le d-1$ we set 
\begin{align*}
M_i &= \frac{ (I:f)\cap J_1\cap \cdots \cap J_i}{I},\\
N_t &= \frac{(I:f) \cap J_1 \cap \cdots \cap J_t+ J_{t+1}}{J_{t+1}}.
\end{align*}
Note that, by definition $M_d\subseteq M_{d-1}  \subseteq \dots \subseteq M_{0}$ and $M_i/M_{i+1} \cong N_i$ for all $i=0,\dots,d-1$.
 
We prove by induction on $d\ge 0$  the following statements: 
\begin{enumerate}
\item[(S1)] 
For $i=0,\ldots,d$ one has  $\reg M_i \le d-1$,
\item[(S2)] For every $i=0,\ldots,d-1$ one has  $\reg N_i  \le d-1$. 
\end{enumerate}
In particular,  for $i=0$ in (S1), since $M_0=(I:f)/I$ we obtain the desired inequality.  

The case $d=0$ is immediate. For  $d=1$ both assertions are obviously true:  (S1) holds since for $i=0$, $M_0$ is either $S/I_1$ or $0$ and  $M_1=0$ while (S2) holds since $N_0=0$. 

Now assume that $d\ge 2$ and (S1) and (S2) hold up to $d-1$, we will establish both (S1) and (S2)  for $d$.

First  we prove that  (S2) holds for $d$.   
We treat in full detail the case $1\le i\le d-1$;  a similar argument works for the case $i=0$.

Denote 
$$
P_1=\frac{(J_{i+1}:f)\cap J_{1,i+1} \cap \cdots \cap J_{i,i+1}}{(I:f) \cap J_1 \cap \cdots \cap J_i+J_{i+1}}
$$
and 
$$P_0=\frac{(J_{i+1}:f)\cap J_{1,i+1} \cap \cdots \cap J_{i,i+1}}{J_{i+1}}$$
so that we have an exact sequence
\[
0\to N_i \to P_0 \to P_1\to 0.
\]
Now $P_0$ corresponds to the module $M_i$ associated to  the family of ideals $I_j$ with $j=1,\dots, d$ and $j\neq i+1$. 
Since (S1) holds for $d-1$, we have that
\[
\reg P_0 \le d-2.
\]
Hence, to prove that $\reg N_i\leq d-1$ it suffices to show that $\reg P_1\le d-2$. 

We will define $I_j$-approximation  maps $P_1\to P_{1,j}$ for $j=1,\dots, d$.  For $j=1,\dots, i+1$ one easily checks that $I_j$ annihilates $P_1$ so it suffices to take $P_{1,j}=0$. 
For  $i+2\le j\le d$  set 
\begin{align*}
U_j &= (J_{i+1,j}:f)\cap J_{1,i+1,j} \cap \cdots \cap J_{i,i+1,j},\\
Y_j &= (J_j:f) \cap J_{1,j} \cap \cdots \cap J_{i,j}+J_{i+1,j},
\end{align*} 
so that $Y_j\subseteq U_j$. We check that  

\begin{enumerate}[\quad \rm (a)]
\item $I_jU_j \subseteq (J_{i+1}:f)\cap J_{1,i+1} \cap \cdots \cap J_{i,i+1} \subseteq U_j$,
\item $I_jY_j \subseteq (I:f) \cap J_1 \cap \cdots \cap J_i+J_{i+1} \subseteq Y_j$,
\item setting $P_{1,j}=U_j/Y_j$ one has $\reg P_{1,j}\le d-3$.
\end{enumerate}

Conditions (a)--(b)   are  easy  to check.  To show that  $\reg P_{1,j}\le d-3$ we consider the exact sequence
\[
0\to \frac{Y_j}{J_{i+1,j}} \to \frac{U_j}{J_{i+1,j}} \to P_{1,j} \to 0.
\]

Now we observe that  $U_j/J_{i+1,j}$ is a module of type $M_v$ associated to the family $\{I_u: 1\leq u\leq d, u\neq i+1, j\}$. So by induction hypothesis we have by (S1) that $\reg (U_j/J_{i+1,j})\le d-3$. Similarly, $Y_j/J_{i+1,j}$ is a module of type $N_v$ associated to the family $\{I_u: 1\leq u\leq  d, u\neq j\}$.
Hence by induction hypothesis for (S2)  we have $\reg (Y_j/J_{i+1,j})\le d-2$. Therefore 
$$
\reg P_{1,j}\le \max\left\{\reg \frac{U_j}{J_{i+1,j}}, \reg \frac{Y_j}{J_{i+1,j}}-1 \right\} \le d-3,
$$
as claimed.

By Lemma \ref{lem_colon_product},   the modules $P_1$ and also $P_{1,1}, \ldots,P_{1,d}$ are quotients  of ideals defined in $k[V]$ (notation as in  \ref{lem_colon_product}).  Hence their regularities can be computed over $k[V]$. But the graded maximal ideal of $k[V]$ is $I_1+\dots+I_d$. 
Hence the family of maps $P_1\to P_{1,j}$ defined above is a generalized approximation system of degree $1$. 
 Finally note that $P_1$ is generated in degree at most $d-2$, being a quotient of $P_0$ that has regularity $\leq d-2$. 
Summing up, by Theorem \ref{thm_DS3.5}, we conclude that $\reg P_1\le d-2$. This concludes the proof of (S2) for $d$. 

 Now we proceed to establish (S1) by showing  that  $\reg M_i\le d-1$ by reverse induction on $i$. For $i=d$, by Lemma \ref{lem_primarydecomp}, 
$I:f=(J_1:f)\cap \cdots \cap (J_d:f)\cap (I_{[d]}^d:f)$. Hence if $f\not\in  I_{[d]}$ one has $M_d=0$.  Else if  $f \in I_{[d]}$,  it follows that $J_1\cap \cdots \cap J_d \subseteq I:f$ so that 
\[
M_d=\frac{J_1\cap \cdots \cap J_d}{I}=\frac{J_1\cap \cdots \cap J_d+I_{[d]}^d}{I_{[d]}^d}
\]
All the ideals involved in the expression are generated by elements that belong to $k[V]$ (notation as in \ref{lem_colon_product}), thus the regularity of $M_d$ can be computed over $k[V]$. Since $I_{[d]}$ is the graded maximal ideal of $k[V]$  we get  that $\reg M_d\leq d-1$ by Lemma \ref{lem_finitelength}.  

Finally for $i<d$, since $M_i/M_{i+1} \cong N_i$ one has  
$$\reg M_i \leq \max\{ \reg M_{i+1}, \reg N_i\}$$ 
and $\reg N_i\leq d-1$ because (S2) has been already proved for $d$. Hence we conclude by reverse induction on $i$ that $\reg M_i\leq d-1$.  The proof is completed. 
\end{proof}

We are now ready to prove Theorem \ref{thm_quadric}. 

\begin{proof}[Proof of Theorem \ref{thm_quadric}]
Let $I\in \Lscr^\infty(S)$ be an ideal. Since $\deg f=2$, by Theorem \ref{thm_reg_quotient},
\[
\creg_S(S/I,R)=\reg (S/I)+\reg S/(f)=\reg(S/I)+1.
\]
Hence $R$ is Tor-linear. The last assertion follows from Proposition \ref{prop_sufficient_condition}(ii). This finishes the proof.
\end{proof} 
As the following example shows if $J\subset S$ has a linear resolution over $S$ and $R$ is a quadric hypersurface then, in general, $JR$ need not have a linear resolution over $R$. This show that  Theorem \ref{thm_quadric} cannot be extended to the family of ideals of $S$ with a linear resolution. 

\begin{ex}
Let $S=k[x_1,x_2,y,z]$, $f=x_1^2$ and $R=S/(f)$. Let $J$ be the ideal of 2-minors of the following matrix
\[
\left(
\begin{matrix}
x_1 & x_2  & y \\
x_2 &  0   & z
\end{matrix}
\right)
\]
By computations with CoCoA \cite{CoCoA}, $\reg J=2$ but $\reg_R JR\ge 3$. 
\end{ex}

\subsection*{Acknowledgments}
We are grateful to Luchezar Avramov, Marc Chardin, Srikanth Iyengar and Matteo Varbaro for some useful conversations related to the content of this paper. The first author  was supported by the Istituto Nazionale di Alta Matematica (INdAM). The second author is a Marie Curie fellow of INdAM. The last author is partially supported by the Vietnam National Foundation for Science and Technology Development under grant number 101.04-2016.21.

\end{document}